\DeclareMathOperator{\RE}{Re} 
 \def \D{\mathbb{D}}
 \def \C{\mathbb{C}}
 \def  \E{e^{i \theta}}
 \def \a{\alpha}
 \def \r {\mathcal{R}}
 \def \om{\omega}
\numberwithin{equation}{section}
\newtheorem{theorem}{Theorem}[section]
\newtheorem{lemma}[theorem]{Lemma}
\newtheorem{corollary}[theorem]{Corollary}
\theoremstyle{remark}
\newtheorem{remark}[theorem]{Remark}
\begin{document}

\title[Estimates of Normalized Analytic Functions]{Certain Estimates of Normalized Analytic Functions }

\author[S. Anand]{Swati Anand}

\address{Department of Mathematics, University of Delhi,
Delhi--110 007, India}
\email{swati\_anand01@yahoo.com}
\author [N.K. Jain]{Naveen Kumar Jain}
\address {Department of Mathematics, Aryabhatta College, Delhi-110021,India}
\email{naveenjain05@gmail.com}
\author [S. Kumar]{Sushil Kumar}
\address {Bharati Vidyapeeth's college of Engineering, Delhi-110063, India}
\email{sushilkumar16n@gmail.com}

\begin{abstract}
Let $\phi$ be a normalized convex function defined on open unit disk $\mathbb{D}$.
For a unified class of normalized analytic functions which satisfy the second order differential subordination $f'(z)+ \a z f''(z) \prec \phi(z)$ for all $z\in \mathbb{D}$, we investigate the distortion theorem and  growth theorem. Further, the bounds on initial logarithmic coefficients, inverse coefficient and the second Hankel determinant involving the inverse coefficients are examined.
\end{abstract}

\keywords{Differential subordination; Growth Theorem; Distortion Theorem; Logarithmic Coefficient; Inverse Coefficient; Hankel determinant.}

\subjclass[2010]{30C45; 30C50; 30C80}

\maketitle

\section{Introduction}
The class of all normalized analytic functions
\begin{equation} \label{fz}
f(z) = z +a_2 z^2+a_3 z^3 + \cdots
\end{equation}
in the unit disk $\D := \{z \in \C : |z| < 1\}$ is denoted by $\mathcal{A}$. Denote by $\mathcal{S}$,  the subclass of $\mathcal{A}$ consisting of univalent functions in $\mathbb{D}$. Let $\mathcal{P}$ be the class of analytic functions $p$ defined on $\D$, normalized by the condition $p(0) = 1$ and satisfying $\RE (p(z))>0$. Let $f$ and $g$ be analytic in $\D$. Then $f$ is subordinate to $g$, denoted by $f \prec g$, if there exists an analytic function $w$ with $w(0) =0$ and $|w(z)| < 1$ for $ z\in \D$ such that $f(z) = g(w(z))$. In particular, if $g$ is univalent in $\D$ then $f$ is subordinate to $g$ when $f(0) = g(0)$ and $f(\D) \subseteq g(\D)$. In this paper we shall assume that $\phi$ is an analytic function with positive real part in $\D$ and normalized by the conditions $\phi(0) = 1$ and $\phi'(0) > 0$. It is noted that $\phi(\D)$ is convex. The function $\phi$ is symmetric with respect to the real axis and it maps  $\D$ onto a region starlike with respect to $\phi(0) = 1$. The Taylor series representation of the function $\phi$ is given by
\begin{equation} \label{phiz}
\phi(z) = 1+B_1 z+B_2 z^2+B_3 z^3+ \cdots,
\end{equation}
where $B_1 > 0$. For such $\phi$,
Ma and Minda \cite{MA} studied the unified subclasses $S^*(\phi)$ and $C(\phi)$ of starlike and convex functions respectively, analytically defined as
\[S^*(\phi) = \left\{f \in \mathcal{A}: \frac{z f'(z)}{f(z)} \prec \phi(z)\right\}\,\, \text{and}\,\quad
C(\phi) = \left\{f \in \mathcal{A}: 1+\frac{z f''(z)}{f'(z)} \prec \phi(z)\right\}.\]
The authors investigated the growth, distortion and coefficient estimates for these classes. In particular, the class $S^*(\phi)$ reduces to some well known subclasses of starlike fuctions. For example, when $-1 \le B < A \le 1$, $S^*[A,B]$ is the class of Janowski starlike functions introduced by Janowski \cite{Jan}. For $0 \leq \alpha <1$, $S^*[1-2 \alpha,-1] = S_{\alpha}^*$ is the class of starlike functions of order $\alpha$, introduced by Robertson \cite{ROB}. The class $\mathcal{SL}=\mathcal{S^*}(\sqrt{1+z})$, introduced by  Sok\'{o}\l\ and\  Stankiewicz \cite{sok}, consists of functions $f\in\mathcal{A}$ such that $zf'(z)/f(z)$ lies in the region $\Omega_L:=\{w\in\mathbb{C}:|w^2-1|<1\}$, that is, the right-half of the lemniscate of Bernoulli. Later, Mendiratta et al. \cite{men} introduced the class $\mathcal{S^*}_e:=\mathcal{S^*}(e^z)$ consists of functions $f\in\mathcal{A}$ satisfying the condition $|\log(zf'(z)/f'(z))|<1$. In 2011, Ali  \emph{et al.} \cite{Ali2}(see also \cite{jain}) studied the class of all those functions $f \in \mathcal{A}$ which satisfy the following third order differential equation
\[f(z)+ \a f'(z) + \gamma z^2 f''(z) = g(z),\]
where the function $g$ is subordinate to a convex function $h$. In \cite{Ali2},  the best dominant on all solutions of the differential equation in terms of double integral were obtained. Some certain variations of the class
$\mathcal{R}(\a ,h) = \{f \in \mathcal{A}: f'(z) + \a z f''(z) \prec h(z), z \in \D\}$,
where $h$ is a convex function have been investigated by several authors \cite{gao,Nasr,SS,HM,Yang}.
On the basis of the above discussed works, we consider a unified class of all functions $f \in \mathcal{A}$ such that

 \begin{equation}
 f'(z) + \alpha z f''(z) \prec \phi(z)
 \end{equation}
for $z \in \D$ and where $\a \in \C$ with $\RE \a \ge 0$. The class of such functions is denoted by  $\r (\a , \phi)$.
Since $f \in R(\alpha, \phi)$,  $f'(z) + \alpha z f''(z) \ne \phi(\E)$, $\theta \in [0,2 \pi)$, it is observed that
\begin{equation*}\label{eq 9}
f'(z) + \alpha z f''(z) = [(1 - \alpha)f(z) + \alpha (zf'(z))]'.
\end{equation*}
Also, we have
$z f'(z) = f(z) \ast \dfrac{z}{(1-z)^2}$ and $f(z) = f(z) \ast \dfrac{z}{1-z}$.
Thus, 
\begin{align*} \label{eq 10}
\notag f'(z) + \alpha z f''(z) &= \bigg((1 - \alpha)f(z)\ast \frac{z}{1-z} + \alpha f(z)\ast \frac{z}{(1-z)^2}\bigg)'\\
& = \bigg( f(z) \ast \left((1-\alpha) \frac{z}{1-z} + \alpha \frac{z}{(1-z)^2}\right)\bigg)'.
\end{align*}
Therefore, we conclude that
\begin{equation*}
\bigg(f(z) \ast \frac{z - z^2(1-\alpha)}{(1-z)^2} \bigg)' - \phi(\E) \ne 0
\end{equation*}
or equivalently
\begin{equation*}
 \frac{1}{z}\bigg(f(z) \ast \frac{z + z^2(2 \alpha - 1)}{(1-z)^3} \bigg) \ne \phi(\E)
\end{equation*}
which is the necessary and sufficient conditions for a function $ f \in \mathcal{A}$ to be in the class $\r(\a ,\phi)$.

In this paper, we compute the distortion, growth inequalities for a function $f$ in the class $ \r(\a ,\phi)$. The sharp bounds on initial logarithmic coefficients for such functions are also obtained. Next, we obtain the bounds on initial inverse coefficients of the function $f \in  \r(\a ,\phi)$ as well as bounds on Fekete Szeg\"{o} functional and second Hankel determinant.

\section{Distortion and Growth Theorem}
The first theorem proves the distortion theorem of the functions $f$ belonging to $\r(\a ,\phi)$.
\begin{theorem} \label{DT}
Let $\alpha \in \C$,  $\RE \alpha \ge 0$ and the function $\phi$ be defined by \eqref{phiz}. If the function $f \in \r(\a ,\phi)$,  then
\[ 1+ \sum_{n = 1}^{\infty} \frac{|B_n| (-r)^n}{n\RE \alpha+1} \le |f'(z)| \le 1 + \sum_{n = 1}^{\infty} \frac{|B_n| r^n}{n \RE\alpha+1} \]
for $|z| < r < 1$. The result is sharp.
\end{theorem}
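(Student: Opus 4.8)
The plan is to turn the membership $f\in\r(\a,\phi)$ into a subordination for $f'$ itself and then read the estimates off from Lindel\"{o}f's subordination principle. Set $g(z):=f'(z)+\alpha zf''(z)$, so that $f\in\r(\a,\phi)$ means exactly $g\prec\phi$. Comparing coefficients of $g(z)=1+\sum_{n\ge1}g_nz^n$ with those of $f$ in \eqref{fz} gives $g_n=(n+1)(1+\alpha n)a_{n+1}$, hence
\[
f'(z)=1+\sum_{n\ge1}\frac{g_n}{1+\alpha n}\,z^n,
\]
equivalently $f'(z)=\tfrac1\alpha\int_0^1u^{1/\alpha-1}g(uz)\,du$ when $\RE\alpha>0$ (the degenerate case $\RE\alpha=0$ being handled from the series). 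Let $F_\alpha\in\A$ be the normalized solution of $F_\alpha'(z)+\alpha zF_\alpha''(z)=\phi(z)$; then $F_\alpha'(z)=1+\sum_{n\ge1}\frac{B_n}{1+\alpha n}z^n$, and $f'$ is the image of $g$ under the same linear operator that sends $\phi$ to $F_\alpha'$. Since $\phi$ is convex and $\RE\alpha\ge0$, this operator carries convex functions to convex functions and preserves subordination to a convex function (standard facts on subordination-preserving integral operators; see Miller and Mocanu); hence $F_\alpha'$ is convex, in particular univalent, and $g\prec\phi$ yields $f'\prec F_\alpha'$.

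Now Lindel\"{o}f's principle applies: as $F_\alpha'$ is univalent, $f'(\{|z|<r\})\subseteq F_\alpha'(\{|w|<r\})$ for $0<r<1$, so for $|z|=r$
\[
\min_{|w|=r}|F_\alpha'(w)|\ \le\ |f'(z)|\ \le\ \max_{|w|=r}|F_\alpha'(w)|.
\]
For the upper extremum, the triangle inequality on the series of $F_\alpha'$ together with $|1+\alpha n|\ge\RE(1+\alpha n)=1+n\RE\alpha$ gives $\max_{|w|=r}|F_\alpha'(w)|\le1+\sum_{n\ge1}\frac{|B_n|}{1+n\RE\alpha}r^n$. For the lower extremum, one uses that $\phi$ is symmetric about the real axis and has positive real part; then $F_\alpha'$ shares these properties (the symmetry when $\alpha$ is real, which is the case giving sharpness), so the convex region $F_\alpha'(\{|w|<r\})$ lies in $\{\RE w>0\}$ and is symmetric about $\mathbb{R}$, whence the point of it nearest the origin is the real value $F_\alpha'(-r)=1+\sum_{n\ge1}\frac{B_n}{1+\alpha n}(-r)^n$, which rewrites as the claimed left-hand side. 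Sharpness then follows by taking $f=F_\alpha$ (Schwarz function $w(z)=z$): equality holds at $z=r$ in the upper bound and at $z=-r$ in the lower bound.

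The step I expect to be the main obstacle is the subordination $f'\prec F_\alpha'$, i.e.\ proving that the averaging operator $h\mapsto1+\sum_{n\ge1}h_nz^n/(1+\alpha n)$ both maps convex functions to convex functions and preserves subordination to a convex function for \emph{every} admissible $\alpha\in\C$ with $\RE\alpha\ge0$. For non-real $\alpha$ the kernel $u^{1/\alpha-1}/\alpha$ is complex-valued, so a naive ``convex combination'' argument is unavailable and one must invoke the machinery of subordination-preserving operators; the boundary direction $\RE\alpha=0$ requires a separate limiting argument. A secondary, more computational, point is verifying that $\min_{|w|=r}|F_\alpha'(w)|$ is attained at $w=-r$, which is exactly what produces the signed (alternating) form of the lower bound rather than the cruder $1-\sum_{n\ge1}|B_n|r^n/(1+n\RE\alpha)$.
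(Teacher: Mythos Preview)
The subordination $f'\prec F_\alpha'$ that you flag as the main obstacle is in fact not one: it is exactly the Hallenbeck--Ruscheweyh lemma (the paper's Lemma~\ref{lem}) applied with $p=f'$, $h=\phi$ and $\gamma=1/\alpha$, which produces the best dominant
\[
q(z)=\tfrac{1}{\alpha}z^{-1/\alpha}\int_0^z\phi(t)\,t^{1/\alpha-1}\,dt=\int_0^1\phi(z\zeta^\alpha)\,d\zeta=F_\alpha'(z)
\]
and already asserts that $q$ is convex. No further ``subordination-preserving operator'' machinery is needed, and the boundary case $\RE\alpha=0$ is covered by the same lemma since its hypothesis is $\RE\gamma\ge0$.

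The genuine gap is your lower bound when $\alpha$ is not real. Your location of $\min_{|w|=r}|F_\alpha'(w)|$ at $w=-r$ rests on $F_\alpha'$ having real coefficients, which you yourself note fails for non-real $\alpha$; yet the theorem is stated for all $\alpha\in\C$ with $\RE\alpha\ge0$, and you offer no substitute argument in that case. The paper sidesteps this entirely by never trying to find the modulus minimum of $F_\alpha'$. It keeps the \emph{integral} form of the dominant, writes $|f'(z)|\ge\RE f'(z)\ge\min_{|z|=r}\RE\int_0^1\phi(z\zeta^\alpha)\,d\zeta$, and pushes the minimum inside the integral. For fixed $\zeta\in(0,1)$ one has $|z\zeta^\alpha|=r\,\zeta^{\RE\alpha}$, and since $\phi$ has real coefficients with $\phi'(0)>0$ the extremum $\min_{|u|=\rho}\RE\phi(u)=\phi(-\rho)$ applies to $\phi$ itself (not to $F_\alpha'$), yielding $\int_0^1\phi(-r\zeta^{\RE\alpha})\,d\zeta$. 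Termwise integration then gives the stated left-hand side, with $\RE\alpha$ appearing precisely because $|\zeta^\alpha|=\zeta^{\RE\alpha}$. The upper bound is argued analogously. So the device you are missing is to exploit the integral representation of the dominant together with the real-axis extrema of $\RE\phi$, rather than the Taylor series of $F_\alpha'$ and a symmetry property it does not possess for complex $\alpha$.
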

We make use of the following lemma in order to prove some of our results.
\begin{lemma} \cite[Lemma 2, p. 192]{HAL}\label{lem}
Let $h$ be a convex function  with $Re\, \gamma \ge 0$. If $p(z)$ is regular in $\D$ and $p(0) = h(0)$, then
\begin{equation}\label{eq 1l}
p(z) +\frac{zp'(z)}{\gamma} \prec h(z)
\end{equation}
implies that $p(z) \prec q(z) \prec h(z)$, where
\[q(z) = \gamma z^{-\gamma} \int_{0}^{z}h(t) t^{\gamma-1}dt.\]
The function $q$  is convex and the  best dominant.
\end{lemma}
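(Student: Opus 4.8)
The plan is to establish the three assertions in turn: that $q$ solves the associated linear differential equation, that $q\prec h$ with $q$ convex, and finally the differential subordination $p\prec q$, with the best-dominant (sharpness) claim coming essentially for free from the first assertion.

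First I would record the elementary identity that drives everything. Writing $z^{\gamma}q(z)=\gamma\int_0^z h(t)t^{\gamma-1}\,dt$ and differentiating gives $\gamma z^{\gamma-1}q(z)+z^{\gamma}q'(z)=\gamma h(z)z^{\gamma-1}$, that is,
\[
q(z)+\frac{zq'(z)}{\gamma}=h(z),\qquad q(0)=h(0).
\]
Substituting $t=uz$ in the integral also yields the averaging representation $q(z)=\gamma\int_0^1 u^{\gamma-1}h(uz)\,du$, where the (complex) weight satisfies $\gamma\int_0^1 u^{\gamma-1}\,du=1$. Since $q$ itself realizes equality, $q$ is one of the admissible functions $p$; consequently any function dominating every admissible $p$ must dominate $q$, which is precisely the statement that $q$ is the best dominant. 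Thus, once $p\prec q$ is proved in general, sharpness is automatic.

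Next I would prove $q\prec h$ together with the convexity of $q$. When $\gamma>0$ is real, the weight $\gamma u^{\gamma-1}\,du$ is a probability measure on $[0,1]$, so $q(z)$ is a genuine average of the points $h(uz)\in h(\D)$; convexity of the domain $h(\D)$ forces $q(\D)\subseteq h(\D)$, and since $q(0)=h(0)$ and $h$ is univalent this gives $q\prec h$. For complex $\gamma$ with $\RE\gamma\ge0$ the averaging measure is no longer positive, and this is where the first real work lies: I would pass to a supporting-line (Herglotz-type) description of the convex set $h(\D)$ and invoke the closure of the class of convex functions under the Hallenbeck--Ruscheweyh integral operator to conclude both $q\prec h$ and that $q$ is itself convex, equivalently $\RE\bigl(1+zq''(z)/q'(z)\bigr)>0$, which is obtained by differentiating the differential equation once more and using that $h$ is convex.

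The heart of the argument is $p\prec q$, for which I would use the Jack--Miller--Mocanu boundary technique. After the standard reduction to functions analytic on $\overline{\D}$ (replace $h$ by $h_r(z)=h(rz)$ and let $r\to1^-$), suppose $p\not\prec q$. Since $p(0)=q(0)$ and $q$ is univalent, there exist $z_0\in\D$ and $\zeta_0\in\partial\D$ with $p(z_0)=q(\zeta_0)$ and $z_0p'(z_0)=m\,\zeta_0q'(\zeta_0)$ for some $m\ge1$. Then
\[
p(z_0)+\frac{z_0p'(z_0)}{\gamma}
=q(\zeta_0)+\frac{m\,\zeta_0q'(\zeta_0)}{\gamma}
=h(\zeta_0)+(m-1)\frac{\zeta_0q'(\zeta_0)}{\gamma},
\]
using $q(\zeta_0)+\zeta_0q'(\zeta_0)/\gamma=h(\zeta_0)$. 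The main obstacle is the concluding geometric step: I must show this value lies \emph{outside} $h(\D)$, contradicting $p+zp'/\gamma\prec h$. Here $h(\zeta_0)$ is a boundary point of the convex region $h(\D)$, and the added vector $(m-1)\zeta_0q'(\zeta_0)/\gamma$ must be shown to point into the outward-normal half-plane at $h(\zeta_0)$. This is where convexity of $q$ (which identifies $\zeta_0q'(\zeta_0)$ with an outward direction), the factor $m-1\ge0$, and the hypothesis $\RE\gamma\ge0$ all enter simultaneously; controlling the argument of $\zeta_0q'(\zeta_0)/\gamma$ against the supporting line of $h(\D)$ at $h(\zeta_0)$ is the delicate part. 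Establishing this containment failure completes the contradiction, and hence the proof.
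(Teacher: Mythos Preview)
The paper does not prove this lemma at all: it is quoted verbatim from Hallenbeck and Ruscheweyh \cite[Lemma~2, p.~192]{HAL} and used only as an auxiliary tool in the proofs of Theorems~\ref{DT} and~\ref{th1}. Consequently there is no ``paper's own proof'' against which to compare your proposal.

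For what it is worth, your outline follows the later Miller--Mocanu differential-subordination framework (Jack's lemma plus the boundary touching argument) rather than the original Hallenbeck--Ruscheweyh argument, which proceeds more directly: they first show that integration against the kernel $\gamma t^{\gamma-1}$ preserves subordination to a convex univalent $h$ (their Theorem~1, proved by a supporting-half-plane reduction to the case $h(z)=1/(1-z)$ together with the classical fact that this operator preserves positive real part), and then obtain the lemma as an immediate corollary by integrating the hypothesis $p+zp'/\gamma\prec h$. Your route is legitimate, but be aware of two soft spots you yourself flag. First, for complex $\gamma$ with $\RE\gamma\ge 0$ the ``averaging'' measure is not positive, and appealing to ``closure of the class of convex functions under the Hallenbeck--Ruscheweyh integral operator'' to get convexity of $q$ and $q\prec h$ is essentially invoking the very result being proved; you would need to supply the half-plane reduction explicitly. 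Second, in the Jack-lemma step, the claim that $(m-1)\zeta_0 q'(\zeta_0)/\gamma$ points into the outward half-plane at $h(\zeta_0)$ requires relating the outer normal of $h(\partial\D)$ at $h(\zeta_0)$ to $\zeta_0 q'(\zeta_0)$, not merely to $\zeta_0 h'(\zeta_0)$; the standard way to close this is to use the identity $h=q+zq'/\gamma$ and the convexity of $q$, so make sure your argument does not become circular at that point.
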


\begin{proof}[Proof of Theorem \ref{DT}]
Let the fuction $f$ be in the class $\r( \a, \phi)$.  Then
$f'(z) + \alpha z f''(z) \prec \phi(z).$ For  $p(z) = f'(z)$  and $\gamma=1/\alpha$, Lemma \ref{lem} yields
\begin{equation*} \label{eq 2}
f'(z) \prec \frac{1}{\alpha} z^{-1/\alpha} \int_{0}^{z} \phi(t) t^{1/\alpha - 1} dt.
\end{equation*}
 On taking $t = z \zeta^{\alpha}$ in (\ref{eq 2}), we get
 \begin{equation}\label{eq 3}
 f'(z) \prec \int_{0}^{1} \phi(z \zeta^{\alpha})d \zeta.
 \end{equation}
Since the function $\phi$ is symmetric with respect to real axis, $\phi(z)$ has real coefficients. Also $\phi'(0)>0$ gives $\phi'(x)$ is increasing on $(0, 1)$. Thus, \begin{equation}\label{eq6}
\min_{|z=r|}\RE\phi(z)=\phi(-r)\quad \text{and}\quad \max_{|z=r|}\RE\phi(z)=\phi(r)
.\end{equation} Using \eqref{eq6} and \eqref{eq 3} for $|z|=r$, we have
\begin{align}\label{eq1}
|f'(z)|\geq\RE f'(z)\geq\min_{|z|=r}\RE f'(z)\geq\min_{|z|=r}\RE \int_{0}^1\phi(z \zeta^{\alpha})d \zeta&=\int_{0}^1\min_{|z|=r}\RE\phi(z \zeta^{\alpha})d \zeta\\ \nonumber
&=\int_{0}^1\phi(-r \zeta^{\RE \alpha})d \zeta.
\end{align}
Similarly, we have
\begin{equation}\label{eq2}
|f'(z)|\leq\int_{0}^1\phi(r \zeta^{\RE \alpha})d \zeta.
\end{equation}
Since $\phi(z \zeta^{\RE \alpha}) = 1 +B_1 z \zeta^{\RE \alpha} + B_2 z^2 \zeta^{2 \RE \alpha} + \cdots$, a simple calculation yields
\begin{align} \label{eq 4}
\notag \int_{0}^{1}\phi(z \zeta^{\RE \alpha}) d \zeta &= \int_{0}^{1}(1 +B_1 z \zeta^{\RE \alpha} + B_2 z^2 \zeta^{2 \RE \alpha} +B_3 z^3 \zeta^{3 \RE \alpha} + \cdots) d \zeta\\
\notag & = 1+\frac{B_1 z}{\RE \alpha +1} + \frac{B_2 z^2}{2 \RE \alpha +1}+\frac{B_3 z^3}{3 \RE \alpha +1}+ \cdots \\
& = 1+ \sum_{n =1 }^{\infty} \frac{B_n z^n}{n \RE \alpha +1}.
\end{align}
From \eqref{eq1}, \eqref{eq2} and \eqref{eq 4} the result follows.
The result is sharp for the function $f:\mathbb{D}\rightarrow\mathbb{C}$ defined by
\begin{equation}\label{eq11}
f(z)=z+ \sum_{n=1}^{\infty} \frac{B_n z^{n+1}}{(n+1)(1+n \RE\alpha)}.\qedhere
\end{equation}
\end{proof}

\begin{theorem}\label{th1}
Let $\alpha \in \C$ such that $\RE \alpha \ge 0$ and the function $\phi$ be as in (\ref{phiz}). Then for the function $f \in R(\alpha, \phi)$, we have
\[ 1+ \sum_{n = 1}^{\infty} \frac{|B_n| (-r)^n}{(n+1)(n \RE\alpha+1)} \leq \frac{|f(z)|}{|z|} \le 1 + \sum_{n = 1}^{\infty} \frac{|B_n| r^n}{(n+1)(n \RE\alpha+1)}\,\, (|z| < r < 1).\]

\end{theorem}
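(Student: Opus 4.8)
The plan is to mirror the proof of Theorem~\ref{DT}, integrating the distortion bound one more time. Starting from the subordination \eqref{eq 3}, namely $f'(z) \prec \int_0^1 \phi(z\zeta^{\a})\,d\zeta$, I would write $f(z) = \int_0^1 f'(tz)\,z\,dt$ for $z \in \D$ (integrating along the radial segment), so that $|f(z)|/|z| \le \int_0^1 |f'(tz)|\,dt$ and, similarly, $|f(z)|/|z| \ge \int_0^1 \RE f'(tz)\,dt \ge \int_0^1 \min_{|w|=t|z|}\RE f'(w)\,dt$. Combining this with the pointwise distortion estimate from Theorem~\ref{DT}, namely $\min_{|w|=\rho}\RE f'(w) \ge 1 + \sum_{n\ge 1} \frac{|B_n|(-\rho)^n}{n\RE\a+1}$ and $\max_{|w|=\rho}|f'(w)| \le 1 + \sum_{n\ge 1}\frac{|B_n|\rho^n}{n\RE\a+1}$, reduces everything to integrating these series in $\rho$ from $0$ to $r$.

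Concretely, for the upper bound I would set $\rho = t r$ and compute
\[
\frac{|f(z)|}{|z|} \le \int_0^1 \left(1 + \sum_{n=1}^{\infty} \frac{|B_n|(tr)^n}{n\RE\a+1}\right) dt = 1 + \sum_{n=1}^{\infty} \frac{|B_n| r^n}{(n+1)(n\RE\a+1)},
\]
where the term-by-term integration is justified by uniform convergence of the series on compact subsets of $\D$ (the radius of convergence of $\phi$ is at least $1$, and we stay inside $|z| = r < 1$); the factor $1/(n+1)$ comes from $\int_0^1 t^n\,dt$. The lower bound is identical with $tr$ replaced by $-tr$, using the first inequality of Theorem~\ref{DT} and the monotonicity relation \eqref{eq6} to pull the minimum through the radial integral, exactly as was done in \eqref{eq1}.

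The only mild subtlety — and the step I would be most careful about — is justifying the interchange of the radial integral $\int_0^1 (\cdot)\,dt$ with the infinite sum, and making sure the one-sided (real-part) estimates compose correctly: one needs $\RE f(z)/z$ to dominate the integral of $\min \RE f'$, which follows since $\RE$ is linear and commutes with the real integral, together with $\RE f'(tz) \ge \min_{|w|=t|z|}\RE f'(w)$. Both points are routine given that $\phi$ has real, nonnegative-indexed Taylor coefficients with radius of convergence $\ge 1$. Sharpness, if desired, is again witnessed by the function in \eqref{eq11}, for which all the inequalities above become equalities along the appropriate rays. I would close by noting that the bounds reduce, when $\a = 0$, to the classical growth estimates for functions with $f' \prec \phi$.
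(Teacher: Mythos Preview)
Your argument is correct and reaches the stated bounds, but it proceeds by a different route than the paper. The paper does not integrate the distortion inequality along a radius; instead it applies Lemma~\ref{lem} a second time, directly to $f(z)/z$. Having shown in the proof of Theorem~\ref{DT} that $f'(z)\prec H(z):=\int_0^1\phi(z\zeta^{\a})\,d\zeta$, the paper first checks that $H$ is itself convex by writing $H=\Phi_{\a}\ast\phi$ with $\Phi_{\a}(z)=\sum_{n\ge0}z^n/(1+n\a)$ and invoking the fact that a Hadamard product of convex functions is convex. Since $p(z)+zp'(z)=f'(z)$ for $p(z)=f(z)/z$, Lemma~\ref{lem} with $\gamma=1$ and $h=H$ then gives the double-integral subordination
\[
\frac{f(z)}{z}\;\prec\;\int_0^1\!\!\int_0^1\phi(z\sigma\zeta^{\a})\,d\sigma\,d\zeta,
\]
from which the modulus bounds are read off via \eqref{eq6} exactly as in Theorem~\ref{DT}. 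Your approach is more elementary in that it avoids both the convolution--convexity step and the second call to the subordination lemma, relying only on the real-part estimate \eqref{eq1} already obtained inside the proof of Theorem~\ref{DT}; the paper's approach, in return, delivers the stronger qualitative conclusion that $f(z)/z$ is subordinate to an explicit convex univalent dominant, not merely trapped between its radial extremes.
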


\begin{proof}
Let
\begin{equation}\label{eq5}
H(z) = \int_{0}^{1} \phi(z \zeta^{\alpha}) d \zeta
\end{equation} and
\[\Phi_{\alpha}(z) = \int_{0}^{1} \frac{1}{1-z t^{\alpha}}dt = \sum_{n = 0}^{\infty} \frac{z^n}{1+n \alpha}.\]
From \cite[Theorem 5, p.113]{RUS}, it is noted that $\Phi_{\alpha}(z)$ is convex with $\RE \a  \ge 0$.
Also,
\[\Phi_{\alpha}(z) \ast \phi(z)  = \left( \sum_{n = 0}^{\infty} \frac{z^n}{1+n \alpha}\right)* \left(1+ \sum_{n = 1}^{\infty} B_n z^n\right)
 = 1+  \sum_{n = 1}^{\infty} \left(\frac{B_n}{1+n \alpha}\right) z^n.
\]
It view of above and   \eqref{eq 4}, we have
\begin{equation*}
\Phi_{\alpha}(z) \ast \phi(z)
 = \int_{0}^{1}\phi(z \zeta^{\alpha})d \zeta = H(z).
\end{equation*}
Since convolution of two convex functions is convex, the function  $H$ is convex and $H(0) = 1$.
Putting $\gamma = 1$ and $h(z) = H(z)$ in Lemma \ref{lem}, we get
\begin{equation}\label{eq 6}
p(z) \prec \frac{1}{z} \int_{0}^{z}H(t)dt
\end{equation}
Using \eqref{eq5},  substituting $t = z\sigma$  and $p(z) ={f(z)}/{z}$ in \eqref{eq 6}, we have
\begin{equation}\label{eq7}
\frac{f(z)}{z}  \prec \int_{0}^{1} \int_{0}^{1} \phi(z\sigma \zeta ^{\alpha})d\sigma d \zeta.
\end{equation}
Let $h(z)=f(z)/z$. Then \eqref{eq6} together with \eqref{eq7} yields

\begin{equation}\label{eq8}
|h(z)|\geq\min_{|z|=r}\RE h(z)\geq\int_{0}^{1} \int_{0}^{1}\min_{|z|=r} \phi(z\sigma \zeta ^{\alpha})d\sigma d \zeta=\int_{0}^{1}\int_{0}^1\phi(-r \sigma\zeta^{\RE \alpha})d\sigma d \zeta.
\end{equation}
and
\begin{equation}\label{eq9}
|h(z)|\leq\max_{|z|=r}\RE h(z)\leq\int_{0}^{1} \int_{0}^{1}\max_{|z|=r} \phi(z\sigma \zeta ^{\alpha})d\sigma d \zeta=\int_{0}^{1}\int_{0}^1\phi(r \sigma\zeta^{\RE \alpha})d\sigma d \zeta.
\end{equation}
A simple calculation shows that
\begin{align}\label{eq10}
\int_{0}^{1} \int_{0}^{1} \phi(z\sigma \zeta ^{\RE\alpha})d\sigma d \zeta
 & = \int_{0}^{1} \left(1+ \sum_{n=1}^{\infty} \left(\frac{B_n}{1+n \RE\alpha}\right)(z\sigma)^n\right) d\sigma \nonumber \\ 
& = 1+ \sum_{n=1}^{\infty} \frac{B_n z^n}{(n+1)(1+n \RE\alpha)}.
\end{align}
Now, the result follows from \eqref{eq8}, \eqref{eq9} and \eqref{eq10}.
The result is sharp for the function $f$ given by \eqref{eq11}.
\end{proof}

\begin{remark}
Letting $\phi(z) = (1-(1-2 \beta)z)/(1-z)$, where $\beta <1$ and $\alpha = 1$, Theorem \ref{th1}  reduces to a result due to Silverman \cite[Corollary 2, p.250]{SILVERMAN}.
Further, for $\phi(z) = (1-(1-2 \beta)z)/(1-z)$, where $\beta <1$ and $\alpha >0$, Theorem \ref{CI} yields \cite[Corollary 3, p.178]{gao}.
\end{remark}

\begin{theorem} \label{CI}
Let $\alpha \in \C$ such that $\RE \alpha \ge 0$ and the function $\phi$ be as in (\ref{phiz}) such that  $f \in R(\alpha, \phi)$. Then
\begin{align}\label{2,1}
|a_n| \le \frac{B_1}{|n + n(n-1) \alpha|}, \quad  \text{for all} \quad n \ge 2.
\end{align}

\end{theorem}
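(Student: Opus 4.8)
The plan is to use that the linear operator $f\mapsto f'(z)+\alpha zf''(z)$ acts diagonally on Taylor coefficients, so that the subordination defining $\r(\alpha,\phi)$ becomes, after peeling off the constant term, a subordination of a power series vanishing at the origin to the convex function $\phi-1$; the bound then drops out of the classical Rogosinski coefficient inequality.

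Concretely, I would first expand: with $f(z)=z+\sum_{n\ge 2}a_nz^n$, termwise differentiation gives
\[
f'(z)+\alpha zf''(z)=1+\sum_{n\ge 2}\bigl(n+\alpha n(n-1)\bigr)a_nz^{n-1}.
\]
Subtracting the common value $1$ at $z=0$ from both sides of $f'(z)+\alpha zf''(z)\prec\phi(z)$ leaves the subordination intact (the Schwarz function $w$ is unchanged), so
\[
g(z):=\sum_{n\ge 2}\bigl(n+\alpha n(n-1)\bigr)a_nz^{n-1}\ \prec\ \phi(z)-1=\sum_{n\ge 1}B_nz^n,
\]
with $g(0)=0=(\phi-1)(0)$.

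Second, since $\phi$ is a normalized convex function, $\phi-1$ is convex (univalent) with $(\phi-1)'(0)=B_1>0$. Rogosinski's inequality for functions subordinate to a convex function then gives that every Taylor coefficient of $g$ is bounded in modulus by $B_1$. Comparing the coefficient of $z^{n-1}$ yields $|n+\alpha n(n-1)|\,|a_n|\le B_1$ for all $n\ge 2$. Because $\RE\alpha\ge 0$ and $n\ge 2$ force $\RE\bigl(1+\alpha(n-1)\bigr)\ge 1$, the factor $n+\alpha n(n-1)=n\bigl(1+\alpha(n-1)\bigr)$ is nonzero, and dividing gives $|a_n|\le B_1/|n+n(n-1)\alpha|$, as claimed. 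There is no real obstacle; the only thing to watch is that the hypotheses of Rogosinski's inequality (convexity and normalization of $\phi$, matching constant terms) are indeed met, which is immediate from the standing assumptions.

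I would also remark that the seemingly natural route through Lemma~\ref{lem} — which only yields $f'(z)\prec H(z)$ with $H(z)=\int_0^1\phi(z\zeta^\alpha)\,d\zeta$ convex, hence $|a_n|\le B_1/(n|1+\alpha|)$ — gives a weaker bound in general, since $|1+(n-1)\alpha|\ge|1+\alpha|$ whenever $\RE\alpha\ge 0$ and $n\ge 2$; so the argument must be run on the undifferentiated subordination $f'+\alpha zf''\prec\phi$ as above. If one prefers not to cite Rogosinski, the identity $f'(z)+\alpha zf''(z)=\phi(w(z))$ for a Schwarz function $w$ together with direct coefficient comparison gives the same conclusion, but the route above is the shortest.
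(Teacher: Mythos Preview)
Your argument is correct and coincides with the paper's own proof: both expand $f'(z)+\alpha z f''(z)=1+\sum_{n\ge 2}[n+n(n-1)\alpha]a_nz^{n-1}$, invoke Rogosinski's coefficient inequality for subordination to the convex function $\phi$ to obtain $|p_{n-1}|\le B_1$, and divide through. Your additional remarks (nonvanishing of $n+n(n-1)\alpha$ and the weakness of the Lemma~\ref{lem} route) are extra care not present in the paper but do not change the approach.
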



\begin{proof}
For $p(z) = 1+ \sum_{n = 1}^{\infty}p_n z^n \in \mathcal{P}$, set
$f'(z) + \alpha z f''(z) = p(z)$,  $ z \in \D$. Since $f \in R(\alpha, \phi)$,  $p(z) \prec \phi(z)$.
A simple calculation gives
\begin{equation} \label{eq 8}
 f'(z) + \alpha z f''(z)
 = 1 + \sum_{n= 2}^{ \infty} [n +n(n-1) \alpha] a_n z^{n-1} = 1+ \sum_{n = 1}^{\infty}p_n z^n.
\end{equation}
On comparing the coefficients of $z^{n-1}$, we get
\[(n + n(n-1) \alpha) a_n = p_{n-1}, \quad \text{for all} \quad n \ge 2.\]
By making use of \cite[Theorem X, p.70] {ROG}, we get $|p_n| \le B_1$, for all $n \ge 1.$
Hence, we get the desired inequality.
The inequality \eqref{2,1} is sharp for the function $f_n$ given by
$f_n'(z) + \alpha z f_n ''(z) = \phi(z^{n-1}).$
\end{proof}
\begin{remark}
On taking $\phi(z) = (1-(1-2 \beta)z)/(1-z)$, where $\beta <1$ and $\alpha = 1$, Theorem \ref{CI}  yields  a result due to Silverman \cite[Corollary 1, p.250]{SILVERMAN}.
Further, letting $\phi(z) = (1-(1-2 \beta)z)/(1-z)$, where $\beta <1$ and $\alpha >0$ Theorem \ref{CI} reduces to \cite[Corollary 2, p.178]{gao}.
\end{remark}

%
%

\section{Bounds on Initial Logarithmic Coefficient}
For a function $f \in \mathcal{S}$, the logarithmic coefficients $\gamma_n$ are defined by the following series expansion:
\begin{equation} \label{4.1}
\log \frac{f(z)}{z} = 2 \sum\limits_{n =1}^{\infty} \gamma_n z^n ,\, z \in \mathbb{D} \setminus \{0\}, \log 1 := 0.
\end{equation}
On comparing the coefficients of $z$ on both the sides, we get the initial logarithmic coefficients
\begin{align}
&\gamma_1 = \frac{1}{2}a_2, \quad\gamma_2 = \frac{1}{2}\left(a_3 - \frac{1}{2}{a_2}^2\right) \notag\\
&\gamma_3 = \frac{1}{2}\left(a_4 - a_2a_3 + \frac{1}{3}{a_2}^3\right) \label{gam}
\end{align}
In 1979, the authors \cite{DUREN} showed that the logarithmic coefficients $\gamma_n$ of every function $f\in \mathcal{S}$ satisfy the inequality
$\sum\limits_{n=1}^{\infty}|\gamma_n|^2 \le {\pi ^2}/{6}$, where the equality holds if and only if the function $f$ is rotation of the Koebe function $k(z) = z (1-\E)^{-2}$ for each $\theta$. The $n^{th}$ logarithmic coefficient of $k(z)$ is $\gamma_n = e^{i n \theta}/n$ for each $\theta$ and $n \ge 1$. In \cite{Ye}, the logarithmic coefficients $\gamma_n$ of each close-to-convex function $f \in S$ is bounded by $(A \log n)/n$ where $A$ is an absolute constant. In 2018, the authors \cite{MFALI,PK} obtained the bounds on logarithmic coefficients of certain subclasses of the class of close-to-convex functions. Recently, Adegani \emph{et. al.} \cite{ADEGANI} investigated the bounds for the  initial logarithmic coefficients of the generalized classes $S^*(\phi)$ and $C(\phi)$.
To find the bounds on initial logarithmic coefficient for class $\mathcal{R}(\alpha, \phi)$, we shall use the following two lemmas.

\begin{lemma}\cite[p.172] {NEH}\label{l1}
Assume that $w$ is a Schwarz function so that $w(z) = \sum\limits_{n =1}^{\infty}c_n z^n$. Then
\[|c_1| \le 1\qquad \mbox{and} \qquad |c_n| \le 1-|c_1|^2, \,\, n= 2,3, \cdots .\]
\end{lemma}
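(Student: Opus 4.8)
The plan is to derive both estimates from the classical Schwarz lemma together with its invariant form at an interior point. Since $w$ is a Schwarz function, $w(0)=0$ and $|w(z)|<1$ for $z\in\D$, so the Schwarz lemma gives $|c_1|=|w'(0)|\le 1$ and, moreover, the function $\om(z):=w(z)/z$ extends analytically to $\D$ with $|\om(z)|\le 1$ there, where
\[
\om(z)=c_1+c_2 z+c_3 z^2+\cdots .
\]
If $|c_1|=1$, the maximum modulus principle forces $\om$ to be the constant $c_1$, so $c_n=0$ for every $n\ge 2$ and the bound $|c_n|\le 1-|c_1|^2$ holds trivially; hence we may assume $|c_1|<1$.

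Next I would record the following sharpening of the Schwarz lemma at an interior point: if $\psi$ is analytic on $\D$ with $|\psi|\le 1$ and $a:=\psi(0)$ satisfies $|a|<1$, then $|\psi'(0)|\le 1-|a|^2$. This is proved by composing $\psi$ with the disk automorphism $T(\zeta)=(\zeta-a)/(1-\overline{a}\zeta)$: the map $T\circ\psi$ sends $\D$ into $\overline{\D}$ and fixes the origin, so (the constant case being trivial) the Schwarz lemma gives $|(T\circ\psi)'(0)|\le 1$, and since $T'(a)=1/(1-|a|^2)$ the chain rule yields $|\psi'(0)|=(1-|a|^2)\,|(T\circ\psi)'(0)|\le 1-|a|^2$.

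Finally, to bound $|c_n|$ for a fixed $n\ge 2$, I would isolate the coefficient $c_n$ by a root-of-unity averaging of $\om$. Let $\zeta=e^{2\pi i/(n-1)}$ and set
\[
\psi_n(u):=\frac{1}{n-1}\sum_{k=0}^{n-2}\om\big(\zeta^{k}u^{1/(n-1)}\big),\qquad u\in\D .
\]
Using that $\tfrac{1}{n-1}\sum_{k=0}^{n-2}\zeta^{km}$ equals $1$ when $(n-1)\mid m$ and $0$ otherwise, one checks that the symmetrized sum depends only on $u$, not on the chosen branch of $u^{1/(n-1)}$, so $\psi_n$ is a well-defined analytic function on $\D$ with
\[
\psi_n(u)=c_1+c_n u+c_{2n-1}u^2+\cdots ;
\]
in particular $\psi_n(0)=c_1$ and $\psi_n'(0)=c_n$. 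Since each summand has modulus at most $1$ on $\D$, so does $\psi_n$, and the invariant Schwarz estimate of the previous paragraph (applied with $a=c_1$) gives $|c_n|=|\psi_n'(0)|\le 1-|c_1|^2$. For $n=2$ this is simply the estimate applied to $\om$ itself.

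The argument is essentially routine; the only points requiring care are the verification that $\psi_n$ is single-valued and analytic in spite of the fractional power $u^{1/(n-1)}$ (this is exactly what the root-of-unity averaging secures) and the passage from the plain Schwarz lemma to the interior estimate $|\psi'(0)|\le 1-|\psi(0)|^2$ for self-maps of $\D$ that need not fix the origin. Neither presents a genuine obstacle.
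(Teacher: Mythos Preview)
The paper does not supply its own proof of this lemma; it is quoted verbatim from Nehari's book and used as a tool. So there is nothing in the paper to compare your argument against.

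Your argument is correct. The first bound is the classical Schwarz lemma, and for the second you combine two standard devices: the Schwarz--Pick estimate $|\psi'(0)|\le 1-|\psi(0)|^2$ for analytic self-maps of $\D$, and a root-of-unity symmetrisation that pushes the coefficient $c_n$ of $\om(z)=w(z)/z$ into the position of the first Taylor coefficient of a new self-map $\psi_n$ of $\D$. The branch-independence of $\psi_n$ is exactly what the averaging over the $(n-1)$th roots of unity guarantees, and the convex combination of values of $\om$ keeps $|\psi_n|\le 1$; once $|c_1|<1$ you actually have $|\om|<1$ on $\D$, so $\psi_n$ maps into the open disk and the Schwarz--Pick bound applies. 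The boundary case $|c_1|=1$ is disposed of correctly.

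For context, the textbook route (as in Nehari) typically reaches the same conclusion via the Schur transform $g(z)=\dfrac{\om(z)-c_1}{1-\overline{c_1}\,\om(z)}$, which is a Schwarz function whose quotient $g(z)/z$ is again a self-map of $\D$; iterating or reading off the appropriate coefficient yields $|c_n|\le 1-|c_1|^2$. Your symmetrisation achieves the same thing in one stroke for every $n\ge 2$, at the mild cost of the auxiliary construction of $\psi_n$. Both approaches are elementary and rely ultimately on the same invariant form of the Schwarz lemma.
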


\begin{lemma} \cite [Theorem 1] {DV} \label{lemH}
Let $w(z) = \sum\limits_{n=1}^{\infty}c_n z^n$ be the Schwarz function. Then for any real numbers $q_1$ and $q_2$, the following sharp inequality holds:
\[|c_3+q_1c_1c_2+q_2 c^3_1| \le H(q_1;q_2),\]
where
\[ H(q_1;q_2) =  \left \{ \begin{array}{ll}
                1, & \mbox{if $(q_1,q_2) \in D_1 \cup D_2 \cup \{(2,1)\}$,} \\
                  |q_2| , & \mbox{if $(q_1,q_2) \in \cup_{k=1}^{7}D_{k} $,}\\
                  \dfrac{2}{3}(|q_1|+1)\left(\dfrac{1+|q_1|}{3(|q_1|+1+q_2)}\right)^\frac{1}{2}, & \mbox{if $(q_1,q_2) \in D_8 \cup D_9,$}\\
                  \dfrac{q_2}{3}\left(\dfrac{q^2_1 - 4}{q^2_1-4q_2}\right) \left(\dfrac{q^2_1 -4}{3(q_2-1)}\right)^\frac{1}{2}, & \mbox{if $(q_1,q_2) \in D_{10} \cup D_{11} \setminus \{(2,1)\} ,$}\\
                  \dfrac{2}{3}(|q_1|-1)\left(\dfrac{|q_1|-1}{3(|q_1|-1-q_2)}\right)^\frac{1}{2}, & \mbox{if $(q_1,q_2) \in D_{12}$}
                             \end{array}
                       \right.\]
 and for $k =1,2, \cdots 12,$ the sets $D_k$ are defined as follows:
 \begin{align*}
 D_1 &= \left\{(q_1,q_2): |q_1| \le \frac{1}{2},|q_2| \le 1\right\},\\
 D_2 &= \left\{(q_1,q_2): \frac{1}{2} \le|q_1| \le 2,\frac{4}{27}(|q_1|+1)^3- (|q_1|+1)\le  q_2 \le 1 \right\},\\
D_3 &= \left\{(q_1,q_2): |q_1| \le \frac{1}{2},q_2 \le -1\right\},\\
D_4 &= \left\{(q_1,q_2): |q_1| \ge \frac{1}{2},q_2 \le -\frac{2}{3}(|q_1|+1) \right\},\\
 D_5 &= \left\{(q_1,q_2): |q_1| \le 2,q_2 \ge 1\right\},\\
 D_6 &= \left\{(q_1,q_2): 2 \le |q_1| \le 4,q_2 \ge \frac{1}{12}(q_1^2+8) \right\},\\
  D_7 &= \left\{(q_1,q_2): |q_1| \ge 4,q_2 \ge \frac{2}{3}(|q_1|-1)\right \},\\
 D_8 &= \left\{(q_1,q_2): \frac{1}{2} \le|q_1| \le 2,-\frac{2}{3}(|q_1|+1) \le q_2 \le \frac{4}{27}(|q_1|+1)^3- (|q_1|+1)\right  \},
\end{align*}
\begin{align*}
D_9 &=\left \{(q_1,q_2): |q_1| \ge 2,-\frac{2}{3}(|q_1|+1) \le q_2 \le \dfrac{2|q_1|(|q_1|+1)}{q^2_1+2|q_1|+4} \right \},\\
D_{10} &= \left \{(q_1,q_2): 2 \le |q_1| \le 4, \dfrac{2|q_1|(|q_1|+1)}{q^2_1+2|q_1|+4} \le q_2  \le \frac{1}{12}(q_1^2+8)\right \},\\
D_{11} &= \left \{(q_1,q_2): |q_1| \ge 4, \dfrac{2|q_1|(|q_1|+1)}{q^2_1+2|q_1|+4} \le q_2  \le \dfrac{2|q_1|(|q_1|-1)}{q^2_1-2|q_1|+4}\right \},\\
D_{12} &= \left \{(q_1,q_2):  |q_1| \ge 4, \dfrac{2|q_1|(|q_1|-1)}{q^2_1-2|q_1|+4} \le q_2  \le \frac{2}{3}(|q_1|-1)\right \}.
  \end{align*}

\end{lemma}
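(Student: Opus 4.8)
The plan is to reduce the estimation of $|c_3 + q_1 c_1 c_2 + q_2 c_1^3|$ to a constrained maximization in two real variables and then to resolve that problem by a case analysis producing the twelve regions $D_k$ together with the corresponding branches of $H(q_1;q_2)$.

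\textbf{Normalization and parametrization.} The Schwarz class is invariant under $w(z)\mapsto e^{-i\beta}w(e^{i\beta}z)$, and this sends $c_3+q_1c_1c_2+q_2c_1^3$ to $e^{3i\beta}(c_3+q_1c_1c_2+q_2c_1^3)$; hence one may assume $c_1=a\in[0,1]$. Running one step of the Schur algorithm (equivalently, applying Carath\'eodory's lemma to $(1+w)/(1-w)\in\mathcal{P}$) gives the classical representation $c_2=(1-a^2)x$ and $c_3=(1-a^2)\big[(1-|x|^2)y-ax^2\big]$ for some $x,y\in\overline{\mathbb{D}}$, which is consistent with Lemma \ref{l1}. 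Substituting,
\[
c_3+q_1c_1c_2+q_2c_1^3=(1-a^2)(1-|x|^2)\,y+a\Big[q_2a^2+q_1(1-a^2)x-(1-a^2)x^2\Big].
\]

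\textbf{Eliminating $y$ and the argument of $x$.} Since $y$ enters linearly with nonnegative coefficient, the triangle inequality yields
\[
|c_3+q_1c_1c_2+q_2c_1^3|\le(1-a^2)(1-|x|^2)+a\big|q_2a^2+q_1(1-a^2)x-(1-a^2)x^2\big|,
\]
with equality for a suitable unimodular $y$, so the bound is in fact an equality for the maximizing data. Writing $x=\rho e^{i\psi}$, $\rho\in[0,1]$, and using that $q_1,q_2$ are real, the task becomes the maximization over $(a,\rho)\in[0,1]^2$ and $\psi\in[0,2\pi)$ of
\[
F(a,\rho,\psi)=(1-a^2)(1-\rho^2)+a\big|q_2a^2+q_1(1-a^2)\rho e^{i\psi}-(1-a^2)\rho^2e^{2i\psi}\big|.
\]
For fixed $a,\rho$ the inner modulus is a trigonometric expression whose stationary points in $\psi$ are either $\psi\in\{0,\pi\}$ (so $x$ is real) or the value with $\cos\psi$ equal to an explicit ratio of the coefficients; comparing these competing maximizers is exactly what cuts the $(q_1,q_2)$–plane into pieces. (As sanity checks, $q_2=0$ recovers the Keogh--Merkes/Fekete--Szeg\"o-type estimate for Schwarz functions, and $q_1=0$ a known special case.)

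\textbf{The planar optimization and the main obstacle.} After the previous step everything reduces to maximizing an explicit piecewise-defined function $\Psi(a,\rho)=\max_\psi F(a,\rho,\psi)$ on the square $[0,1]^2$; its global maximum is attained at a vertex, on an edge, or at an interior critical point, and each possibility forces $(q_1,q_2)$ to satisfy one of the polynomial inequality systems defining $D_1,\dots,D_{12}$, with the value of $\Psi$ there returning the listed branch of $H(q_1;q_2)$. The branch $H=1$ comes from the vertex $a=0$ (extremal $w(z)=e^{i\beta}z^3$), the branch $H=|q_2|$ from the vertex $a=1,\rho=0$ (extremal $w(z)=e^{i\beta}z$), and the four remaining branches from edges/interior points, with extremal functions the degree-two Blaschke products $w(z)=e^{i\beta}z\,\dfrac{z+b}{1+\bar b z}$ for an appropriate $b\in\overline{\mathbb{D}}$; exhibiting these gives sharpness in each region. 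The substance of the proof lies entirely here: showing that the regions $D_1,\dots,D_{12}$ cover the whole plane, are exactly the polynomial sets written in the statement, and glue continuously along shared boundaries is a long and delicate piece of elementary calculus — this is precisely the computation carried out by Prokhorov and Szynal and restated in \cite[Theorem 1]{DV} — whereas the normalization and the reduction steps above are routine.
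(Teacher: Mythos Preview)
The paper does not prove this lemma at all: it is quoted verbatim as \cite[Theorem~1]{DV} and used as a black box in Theorems~\ref{LOG} and~\ref{INV}. So there is no ``paper's own proof'' to compare against; your sketch is an outline of the original Prokhorov--Szynal argument rather than of anything appearing in this manuscript.

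As a sketch of that argument, your reduction steps (rotation to make $c_1\ge 0$, the Schur parametrization $c_2=(1-a^2)x$, $c_3=(1-a^2)[(1-|x|^2)y-ax^2]$, elimination of $y$ by the triangle inequality with equality attainable, then maximization in $\psi=\arg x$) are correct and standard. You also correctly identify the extremal functions for the two ``easy'' branches and honestly flag that the heart of the matter --- the twelve-region case analysis on $[0,1]^2$ --- is the long computation done in \cite{DV}. That is accurate, but it also means your proposal is not a self-contained proof: the decisive step is still deferred to the reference. If the intent was to supply an independent proof, the gap is precisely that case analysis; if the intent was merely to explain why the cited result holds, the sketch is fine, but note that the present paper gives no such explanation and simply invokes the lemma.
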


\begin{theorem} \label{LOG}
Let $\alpha \in \C$ such that $\RE \alpha \ge 0$ and the function $\phi$ be as in (\ref{phiz}). Suppose $f \in \mathcal{R}(\alpha, \phi)$, then the initial  logarithmic coefficients of $f$ satisfy the following inequalities:
\begin{itemize}
\item[(i)] $|\gamma_1| \le \dfrac{B_1}{4|1+\a|}$.
\item [(ii)] \[|\gamma_2| \le \left\{ \begin{array}{ll}
                \dfrac{B_1}{6 |1+2 \a|}, & \mbox{if $|8(1+\a )^2B_2 -3(1+2 \a )B^2_1| \le 8 B_1 |(1+\a )^2|$} \\
                   \dfrac{|8(1+\a )^2B_2 -3(1+2 \a )B^2_1|}{48|(1+\a )^2| |1+ 2\a |} , & \mbox{if $|8(1+\a )^2B_2 -3(1+2 \a )B^2_1| > 8 B_1 |(1+\a )^2|$}
                             \end{array}
                       \right.\]
\item[(iii)] If $B_1,B_2,B_3$ and $\a $ are real numbers, then
\[|\gamma_3| \le \frac{B_1}{8(1+3 \a )} H(q_1; q_2),\]
where $H(q_1; q_2)$ is given in Lemma \ref{lemH} such that
\[q_1 = \frac{2B_2}{B_1} - \frac{2B_1(1+3 \a )}{3(1+ \a )(1+2 \a )}\] and
\[q_2 = \frac{B_3}{B_1} - \frac{2 B_2 (1+3 \a )}{3 (1+\a )(1+2 \a )} + \frac{B^2_1(1+3 \a )}{6(1+ \a )^3}.\]
\end{itemize}
The bounds for $\gamma_1$ and $\gamma_2$ are sharp.
\end{theorem}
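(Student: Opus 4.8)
The plan is to begin from the subordination defining the class. Since $\phi$ is univalent, $f\in\mathcal{R}(\a,\phi)$ means there is a Schwarz function $w(z)=c_1z+c_2z^2+c_3z^3+\cdots$ with $f'(z)+\a zf''(z)=\phi(w(z))$. Expanding the right-hand side as $1+B_1w+B_2w^2+B_3w^3+\cdots$ and comparing with the series $f'(z)+\a zf''(z)=1+\sum_{n\ge2}[n+n(n-1)\a]a_nz^{n-1}$ coming from \eqref{eq 8}, I would read off $a_2=B_1c_1/(2(1+\a))$, $a_3=(B_1c_2+B_2c_1^2)/(3(1+2\a))$ and $a_4=(B_1c_3+2B_2c_1c_2+B_3c_1^3)/(4(1+3\a))$.

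Next I would substitute these into the formulas \eqref{gam}. Part (i) is immediate: $\gamma_1=a_2/2$, so $|\gamma_1|\le B_1/(4|1+\a|)$ by $|c_1|\le1$ from Lemma \ref{l1}. For part (ii), writing out $\gamma_2=\tfrac12(a_3-\tfrac12 a_2^2)$ and simplifying gives $\gamma_2=\frac{B_1}{6(1+2\a)}(c_2+\mu c_1^2)$ with $\mu=\frac{B_2}{B_1}-\frac{3(1+2\a)B_1}{8(1+\a)^2}$. I would then invoke the elementary bound $|c_2+\mu c_1^2|\le\max\{1,|\mu|\}$, valid for complex $\mu$: it follows from $|c_2|\le1-|c_1|^2$, since if $|\mu|\le1$ the sum is $\le1$ and otherwise it is $\le|\mu|$. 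Because $|\mu|=|8(1+\a)^2B_2-3(1+2\a)B_1^2|/(8B_1|(1+\a)^2|)$, the dichotomy $|\mu|\le1$ versus $|\mu|>1$ is exactly the case split in (ii).

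For part (iii), substituting the $a_n$ into $\gamma_3=\tfrac12(a_4-a_2a_3+\tfrac13 a_2^3)$ and carrying out a longer reorganization — clearing the three denominators $4(1+3\a)$, $6(1+\a)(1+2\a)$ and $24(1+\a)^3$ and collecting the $c_3$, $c_1c_2$ and $c_1^3$ contributions — should produce $\gamma_3=\frac{B_1}{8(1+3\a)}(c_3+q_1c_1c_2+q_2c_1^3)$ with $q_1,q_2$ as in the statement. When $B_1,B_2,B_3,\a$ are real, $q_1$ and $q_2$ are real, so Lemma \ref{lemH} applies verbatim and yields $|\gamma_3|\le\frac{B_1}{8(1+3\a)}H(q_1;q_2)$.

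Finally, for sharpness of the first two bounds I would exhibit extremal functions via monomial Schwarz functions: the function $f$ with $f'(z)+\a zf''(z)=\phi(z)$ (so $c_1=1$, $c_2=0$) gives equality in (i) and in the second branch of (ii), while the function with $f'(z)+\a zf''(z)=\phi(z^2)$ (so $c_1=0$, $c_2=1$) gives equality in the first branch of (ii); rotations of $w$ handle the phase. The main obstacle is the bookkeeping in part (iii): there is no conceptual difficulty, only a real risk of an algebraic slip when matching the collected coefficients to the prescribed $q_1$ and $q_2$. A secondary point to state carefully is the max-bound for $|c_2+\mu c_1^2|$ with complex $\mu$, since $\a$ is not assumed real in parts (i) and (ii).
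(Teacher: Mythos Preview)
Your proposal is correct and follows essentially the same route as the paper's own proof: express $a_2,a_3,a_4$ in terms of the Schwarz coefficients $c_1,c_2,c_3$ via $f'(z)+\a zf''(z)=\phi(w(z))$, substitute into \eqref{gam}, then apply Lemma~\ref{l1} for $\gamma_1$ and $\gamma_2$ and Lemma~\ref{lemH} for $\gamma_3$, with sharpness at $|c_1|=1$ and $|c_1|=0$ respectively. The only cosmetic difference is that you invoke the packaged bound $|c_2+\mu c_1^2|\le\max\{1,|\mu|\}$ for part~(ii), whereas the paper unfolds the same triangle-inequality argument explicitly before performing the case split.
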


\begin{proof}
Let $f(z) = z+ \sum\limits_{n=2}^{\infty}a_n z^n\in \r (\a , \phi)$ where $\phi$ is given by (\ref{phiz}). Then $f'(z) + \a z f''(z) = \phi(w(z))$
for $z \in \D$, where $w(z) = \sum\limits_{n=1}^{\infty}c_n z^n$ is the Schwarz function.
A simple calculation yields
\[f'(z) + \a z f''(z) = 1+B_1 c_1 z+ (B_1 c_2+B_2 c^2_1)z^2 + (B_1c_3 + 2c_1 c_2 B_2+B_3 c^3_1)z^3+ \cdots .\]
On comparing the coefficients of $z$, we obtain
\begin{align}
2(1+\a )a_2 &= B_1c_1,\notag\\
3(1+2\a ) a_3 &= B_1c_2+B_2c^2_1  \quad\text{and} \notag\\
4(1+3 \a ) a_4 &= B_1c_3 +2 B_2 c_1 c_2  +B_3c^3_1. \nonumber
\end{align}
On substituting the above values of $a_i \,(i=1,2,3)$  in  \eqref{gam}, we get
\begin{align}
\gamma_1 &= \frac{B_1c_1}{4(1+\a )} , \notag \\
\gamma_2 &= \frac{8(1+\a )^2 B_1c_2+(8(1+\a )^2B_2 -3(1+2\a )B^2_1)c^2_1}{48(1+\a )^2 (1+2\a )} , \notag \\
\gamma_3 &= \frac{B_1}{8(1+3 \a )} c_3 +  \left(\frac{B_2}{4(1+3 \a )} - \frac{B^2_1}{12(1+
\a )(1+2 \a )}\right)c_1 c_2 \notag\\
& + \left(\frac{B_3}{8(1+3 \a )} - \frac{B_1 B_2}{12(1+
\a )(1+2 \a )} + \frac{B^3_1}{48(1+ \a )^3}\right)c^3_1. \label{ag}
\end{align}
By using Lemma \ref{l1}, we get the desired best possible estimate on $\gamma_1$. The bound is sharp for $|c_1| = 1$.

\begin{align*}
|\gamma_2| &= \left|\frac{B_1}{6(1+2\a )}c_2 + \frac{8(1+\a )^2B_2 -3(1+2\a )B^2_1}{48(1+\a )^2 (1+2\a )} c^2_1 \right|\\
& \le \frac{B_1}{6|1+2\a |}|c_2| + \frac{|8(1+\a )^2B_2 -3(1+2\a )B^2_1|}{48|(1+\a )^2| |1+2\a |} |c^2_1| \\
& \le \frac{B_1}{6|1+2\a |}(1-|c_1|^2) + \frac{|8(1+\a )^2B_2 -3(1+2\a )B^2_1)|}{48|(1+\a )^2| |1+2\a |} |c^2_1| \\
&=\frac{B_1}{6|1+2\a |}+ \left(\frac{|8(1+\a )^2B_2 -3(1+2\a )B^2_1|}{48|(1+\a )^2| |1+2\a |} - \frac{B_1}{6|1+2\a |}\right)|c^2_1| \\
&\le \left\{ \begin{array}{ll}
                \dfrac{B_1}{6 |1+2 \a|}, & \mbox{if $\dfrac{|8(1+\a )^2B_2 -3(1+2 \a )B^2_1|}{48|(1+\a )^2| |1+ 2\a |} \le \dfrac{B_1}{6 |1+2 \a|}$} \\
                   \dfrac{|8(1+\a )^2B_2 -3(1+2 \a )B^2_1|}{48|(1+\a )^2| |1+ 2\a |} , & \mbox{if $\dfrac{|8(1+\a )^2B_2 -3(1+2 \a )B^2_1|}{48|(1+\a )^2| |1+ 2\a |} > \dfrac{B_1}{6 |1+2 \a|}$}
                             \end{array}
                       \right.
\end{align*}
These bounds are sharp for $|c_1| = 0$ and $|c_1| = 1$, respectively.

The third inequality follows by Lemma \ref{l1}.
Using  Lemma \ref{lemH} for $\gamma_3$, we obtain
\begin{align*}
|\gamma_3| &= \left| \frac{B_1}{8(1+3 \a )} c_3 +  \left(\frac{B_2}{4(1+3 \a )} - \frac{B^2_1}{12(1+
\a )(1+2 \a )}\right)c_1 c_2\right. \\
&\left.+ \left(\frac{B_3}{8(1+3 \a )} - \frac{B_1 B_2}{12(1+
\a )(1+2 \a )} + \frac{B^3_1}{48(1+ \a )^3}\right)c^3_1 \right|\\
&=\frac{B_1}{8(1+3 \alpha)} |c_3 +c_1c_2q_1 +c^3_1q_2|\\
&\le \frac{B_1}{8(1+3 \alpha)} H(q_1;q_2),
\end{align*}
where $q_1 = 2 \left( \dfrac{B_2}{B_1} - \dfrac{B_1(1+3 \a )}{3(1+ \a )(1+2 \a )}\right)$ and $q_2 = \dfrac{B_3}{B_1} - \dfrac{2B_2(1+3\a )}{3(1+\a )(1+2 \a )} + \dfrac{B^2_1(1+3 \a )}{6(1+\a )^3}$.
This completes the proof.
\end{proof}
On taking $\phi(z) = e^z$, $\phi(z) = \sqrt{1+z}$ and $\phi(z) = (1+z)/(1-z)$, respectively in Theorem \ref{LOG} the following corollaries follows immediately.

\begin{corollary}
Let $\alpha \in \C$ such that $\RE \alpha \ge 0$ and $\phi(z) = e^z$. Suppose $f \in \mathcal{R}(\alpha, \phi)$, then the initial  logarithmic coefficients of $f$ satisfy the following inequalities:
\begin{itemize}
\item[(i)] $|\gamma_1| \le \dfrac{1}{4|1+\a|}$
\item [(ii)] $|\gamma_2| \le
                \dfrac{1}{6 |1+2 \a|}$
 \item[(iii)]
$|\gamma_3| \le \dfrac{1}{8(1+3 \a )} H(q_1; q_2)$,

where $H(q_1; q_2)$ is given in Lemma \ref{lemH} such that
\[q_1 = \frac{1+3 \a(1+2 \a)}{3(1+ \a )(1+2 \a )}\] and
\[q_2 = \frac{1}{6} - \frac{(1+3 \a )}{3 (1+\a )(1+2 \a )} + \frac{(1+3 \a )}{6(1+ \a )^3}.\]
\end{itemize}
\end{corollary}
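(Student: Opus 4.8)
The plan is to specialize Theorem~\ref{LOG} to the case $\phi(z)=e^z$, so the entire argument reduces to identifying the Taylor coefficients $B_1,B_2,B_3$ of $e^z$ and substituting them into the three bounds already established. Since $e^z = 1 + z + \tfrac{1}{2}z^2 + \tfrac{1}{6}z^3 + \cdots$, we read off $B_1 = 1$, $B_2 = \tfrac{1}{2}$, and $B_3 = \tfrac{1}{6}$; all are positive reals, so the hypotheses of Theorem~\ref{LOG}, in particular the reality assumption needed for part~(iii), are met. Part~(i) is then immediate: $|\gamma_1| \le B_1/(4|1+\a|) = 1/(4|1+\a|)$.

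For part~(ii) I would check which branch of the piecewise bound in Theorem~\ref{LOG}(ii) is active. The test quantity is $|8(1+\a)^2 B_2 - 3(1+2\a)B_1^2| = |4(1+\a)^2 - 3(1+2\a)| = |4\a^2 + 8\a + 4 - 6\a - 3| = |4\a^2 + 2\a + 1|$, to be compared with $8 B_1 |(1+\a)^2| = 8|1+\a|^2$. One would verify that $|4\a^2 + 2\a + 1| \le 8|1+\a|^2$ holds for all $\a\in\C$ with $\RE\a \ge 0$ (indeed this should hold broadly, since the left side is a perturbation of $4(1+\a)^2$ by the lower-order term $-6\a - 3$, which is dominated by the $8|1+\a|^2$ on the right in the relevant region); hence the first branch applies and $|\gamma_2| \le B_1/(6|1+2\a|) = 1/(6|1+2\a|)$.

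Part~(iii) is pure substitution into the formulas for $q_1$ and $q_2$ from Theorem~\ref{LOG}. With $B_1 = 1$, $B_2 = \tfrac12$, $B_3 = \tfrac16$ we get
\[
q_1 = 2\left(\frac{1/2}{1} - \frac{1+3\a}{3(1+\a)(1+2\a)}\right) = 1 - \frac{2(1+3\a)}{3(1+\a)(1+2\a)},
\]
and a short common-denominator simplification rewrites this as $\dfrac{3(1+\a)(1+2\a) - 2(1+3\a)}{3(1+\a)(1+2\a)} = \dfrac{1 + 3\a + 6\a^2}{3(1+\a)(1+2\a)}$, i.e.\ $q_1 = \dfrac{1 + 3\a(1+2\a)}{3(1+\a)(1+2\a)}$ as claimed. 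Similarly,
\[
q_2 = \frac{1}{6} - \frac{1+3\a}{3(1+\a)(1+2\a)} + \frac{1+3\a}{6(1+\a)^3},
\]
which matches the stated $q_2$ verbatim. The conclusion $|\gamma_3| \le \dfrac{1}{8(1+3\a)} H(q_1;q_2)$ then follows directly from Theorem~\ref{LOG}(iii). The only mild obstacle is the inequality verification in part~(ii)—confirming the first branch is the operative one over the whole parameter region $\RE\a \ge 0$—but this is an elementary estimate on a complex quadratic, not a serious difficulty.
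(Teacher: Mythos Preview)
Your proposal is correct and follows exactly the paper's approach: the paper simply states that the corollary ``follows immediately'' from Theorem~\ref{LOG} upon taking $\phi(z)=e^z$, and you carry out precisely that specialization. In fact you do more than the paper, since you explicitly verify that the first branch in part~(ii) is the operative one (this is indeed an elementary check: writing $\alpha=x+iy$ with $x\ge 0$, the inequality $|4\alpha^2+2\alpha+1|\le 8|1+\alpha|^2$ reduces via the triangle inequality to $4x^2+14x+7\ge 0$, which is immediate), whereas the paper leaves this implicit.
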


\begin{corollary}
Suppose that $f \in \mathcal{R}(\alpha, \phi)$ where $\RE \alpha \ge 0$ and $\phi(z) = \sqrt{1+z}$, then the initial  logarithmic coefficients of $f$ satisfy the following inequalities:
\begin{itemize}
\item[(i)] $|\gamma_1| \le \dfrac{1}{8|1+\a|}$
\item [(ii)] $|\gamma_2| \le
                \dfrac{1}{12 |1+2 \a|}$
 \item[(iii)]
$|\gamma_3| \le \dfrac{1}{16(1+3 \a )} H(q_1; q_2)$,

where $H(q_1; q_2)$ is given in Lemma \ref{lemH} such that
\[q_1 = -\frac{1}{2}-\frac{(1+3 \a)}{3(1+ \a )(1+2 \a )}\] and
\[q_2 = \frac{1}{8} + \frac{1+3 \a }{12 (1+\a )(1+2 \a )} + \frac{1+3 \a }{24(1+ \a )^3}.\]
\end{itemize}
\end{corollary}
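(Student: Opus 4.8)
The plan is to obtain this corollary as an immediate specialization of Theorem \ref{LOG} to the Ma--Minda function $\phi(z)=\sqrt{1+z}$; consequently the only substantive work is to read off the coefficients $B_1,B_2,B_3$ from the binomial expansion and substitute them into the bounds already proved there. First I would record the Taylor coefficients: writing $\phi(z)=(1+z)^{1/2}$ and expanding by the binomial series gives
\[(1+z)^{1/2}=1+\frac{1}{2}z-\frac{1}{8}z^2+\frac{1}{16}z^3-\cdots,\]
so comparison with \eqref{phiz} yields $B_1=\tfrac12$, $B_2=-\tfrac18$, and $B_3=\tfrac{1}{16}$.

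For part (i), substituting $B_1=\tfrac12$ into the estimate $|\gamma_1|\le B_1/(4|1+\a|)$ of Theorem \ref{LOG}(i) gives $|\gamma_1|\le 1/(8|1+\a|)$ at once, with sharpness inherited from the parent theorem. For part (ii), I would substitute $B_1=\tfrac12$ and $B_2=-\tfrac18$ into the quantity $8(1+\a)^2B_2-3(1+2\a)B_1^2$ that governs the dichotomy in Theorem \ref{LOG}(ii), obtaining $-(1+\a)^2-\tfrac34(1+2\a)$. The one point requiring verification is that the \emph{first} branch of that dichotomy is the active one, i.e. that
\[\bigl|{-(1+\a)^2-\tfrac34(1+2\a)}\bigr|\le 8B_1|(1+\a)^2|=4|(1+\a)^2|.\]
For real $\a\ge0$ the bracket is negative, so after clearing the modulus this reduces to $3+6\a+3\a^2\ge\tfrac34+\tfrac32\a$, equivalently $3\a^2+\tfrac92\a+\tfrac94\ge0$, which plainly holds on $\a\ge0$; the first branch then delivers $|\gamma_2|\le B_1/(6|1+2\a|)=1/(12|1+2\a|)$, again sharp.

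For part (iii), working with $B_1,B_2,B_3,\a$ real, I would feed the three coefficient values into the prefactor $B_1/(8(1+3\a))$ and into the formulas for $q_1,q_2$ from Theorem \ref{LOG}(iii). The prefactor collapses to $1/(16(1+3\a))$, and a routine simplification of
\[q_1=2\Bigl(\frac{B_2}{B_1}-\frac{B_1(1+3\a)}{3(1+\a)(1+2\a)}\Bigr),\qquad q_2=\frac{B_3}{B_1}-\frac{2B_2(1+3\a)}{3(1+\a)(1+2\a)}+\frac{B_1^2(1+3\a)}{6(1+\a)^3}\]
reproduces exactly the stated $q_1=-\tfrac12-\frac{1+3\a}{3(1+\a)(1+2\a)}$ and $q_2=\tfrac18+\frac{1+3\a}{12(1+\a)(1+2\a)}+\frac{1+3\a}{24(1+\a)^3}$; inserting these into $H(q_1;q_2)$ of Lemma \ref{lemH} closes the estimate. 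The computation is almost entirely mechanical, and the only genuinely non-routine step is the branch check in (ii)---confirming which case of the $\gamma_2$ dichotomy is triggered by $\phi=\sqrt{1+z}$---while everything else is direct substitution and algebraic simplification.
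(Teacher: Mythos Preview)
Your approach matches the paper's exactly: the corollary is obtained by direct substitution of the binomial coefficients $B_1=\tfrac12$, $B_2=-\tfrac18$, $B_3=\tfrac1{16}$ into Theorem~\ref{LOG}, and the paper says no more than that it ``follows immediately.''

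One small point on your branch check in~(ii): the corollary is stated for complex $\alpha$ with $\RE\alpha\ge 0$, but your verification that the first branch of the $\gamma_2$ dichotomy applies is carried out only for real $\alpha\ge 0$. The inequality does in fact hold on the whole half-plane --- writing $w=1+\alpha$ (so $|w|\ge 1$), the condition becomes $|w^2+\tfrac32 w-\tfrac34|\le 4|w|^2$, and the triangle inequality gives $|w|^2+\tfrac32|w|+\tfrac34\le 4|w|^2$ whenever $|w|\ge\tfrac{1+\sqrt5}{4}<1$ --- so your conclusion stands, but the argument as written does not quite cover the stated hypothesis.
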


\begin{corollary}
Let the function $f \in \mathcal{R}(\alpha, \phi)$ where $\RE \alpha \ge 0$ and $\phi(z) = (1+z)/(1-z)$, then the initial  logarithmic coefficients of $f$ satisfy the following inequalities:
\begin{itemize}
\item[(i)] $|\gamma_1| \le \dfrac{1}{2|1+\a|}$
\item [(ii)] $|\gamma_2| \le
                \dfrac{1}{3 |1+2 \a|}$
 \item[(iii)]
$|\gamma_3| \le \dfrac{1}{4(1+3 \a )} H(q_1; q_2)$,

where $H(q_1; q_2)$ is given in Lemma \ref{lemH} such that
\[q_1 = \frac{2(1+3 \a(1+2 \a))}{3(1+ \a )(1+2 \a )}\] and
\[q_2 = 1 - \frac{4(1+3 \a )}{3 (1+\a )(1+2 \a )} + \frac{2(1+3 \a )}{3(1+ \a )^3}.\]
\end{itemize}
\end{corollary}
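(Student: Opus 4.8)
The plan is to derive this corollary as a direct specialization of Theorem \ref{LOG} with $\phi(z) = (1+z)/(1-z)$, so that the substance reduces to identifying the Taylor coefficients $B_n$ of $\phi$ and simplifying the resulting expressions. First I would expand $\phi$: since $(1+z)/(1-z) = (1+z)\sum_{n\ge 0}z^n = 1 + 2\sum_{n\ge 1}z^n$, comparison with \eqref{phiz} gives $B_n = 2$ for all $n\ge 1$, and in particular $B_1 = B_2 = B_3 = 2$. These are the only data Theorem \ref{LOG} requires.

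Parts (i) and (ii) then follow by substitution. For (i), the estimate $|\gamma_1|\le B_1/(4|1+\alpha|)$ becomes $1/(2|1+\alpha|)$. For (ii), the two-case bound of Theorem \ref{LOG}(ii) must be resolved, and this is the one step demanding genuine care. With $B_1 = B_2 = 2$ one computes $8(1+\alpha)^2 B_2 - 3(1+2\alpha)B_1^2 = 4(4\alpha^2 + 2\alpha + 1)$ and $8B_1|(1+\alpha)^2| = 16|1+\alpha|^2$, so the inequality selecting the first branch reads $|4\alpha^2 + 2\alpha + 1| \le 4|1+\alpha|^2$. I would verify this for every $\alpha$ with $\RE\alpha\ge 0$ by checking that $16|1+\alpha|^4 - |4\alpha^2 + 2\alpha + 1|^2 \ge 0$; writing $\alpha = x + iy$ with $x\ge 0$, this difference reduces to an expression that is positive throughout the half-plane, being a sum of a strictly positive cubic in $1+x$ and the nonnegative term $12\,y^2(4x+3)$. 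Hence the first branch always governs and the bound collapses to $B_1/(6|1+2\alpha|) = 1/(3|1+2\alpha|)$.

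For (iii), where the realness hypothesis of Theorem \ref{LOG}(iii) is in force, I would substitute $B_1 = B_2 = B_3 = 2$ into the prefactor $B_1/(8(1+3\alpha)) = 1/(4(1+3\alpha))$ and into the formulas for $q_1$ and $q_2$. For $q_1$ this means combining $2B_2/B_1 = 2$ with $2B_1(1+3\alpha)/(3(1+\alpha)(1+2\alpha))$ over the common denominator $3(1+\alpha)(1+2\alpha)$; the numerator simplifies as $6(1+\alpha)(1+2\alpha) - 4(1+3\alpha) = 2(1 + 3\alpha + 6\alpha^2) = 2(1+3\alpha(1+2\alpha))$, yielding the stated $q_1$. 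The three terms of $q_2$ reduce to $1$, then $4(1+3\alpha)/(3(1+\alpha)(1+2\alpha))$, and finally $2(1+3\alpha)/(3(1+\alpha)^3)$, matching the claim. All of these are routine algebraic simplifications; the only obstacle of any substance is the case verification in part (ii), namely confirming that the first branch of the piecewise estimate is the active one across the entire admissible range of $\alpha$.
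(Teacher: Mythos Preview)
Your proposal is correct and follows the paper's intended route: the corollary is stated immediately after Theorem \ref{LOG} with the remark that it ``follows immediately'' by taking $\phi(z)=(1+z)/(1-z)$, and the paper gives no further proof. Your specialization $B_1=B_2=B_3=2$ and the algebraic simplifications for $q_1,q_2$ match exactly. In fact you supply something the paper omits: the verification that the first branch in Theorem \ref{LOG}(ii) is the active one for all $\alpha$ with $\RE\alpha\ge 0$, via the positivity of $16|1+\alpha|^4-|4\alpha^2+2\alpha+1|^2 = 48x^3+84x^2+60x+15+12y^2(4x+3)$ for $\alpha=x+iy$, $x\ge 0$.
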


\section{Inverse Coefficient Estimates}
From the Koebe one quarter theorem, the image of $\D$ under a function $f\in \mathcal{S}$ contains a disk of radius $1/4$. Thus for every univalent function $f$ there exist inverse function $f^{-1}$ such that $f^{-1}(f(z)) = z$ for $z \in
\D$ and $f(f^{-1}(\omega)) = \omega$ for $ |\omega| < r_0(f)$ where $r_0(f) \ge 1/4$. The function $f^{-1}$ has the Taylor series expansion
$f^{-1}(\omega) = \omega +A_2 \omega ^2+A_3 \omega ^3+ \cdots$
in some neighborhood of origin.
In 1923, L\"{o}wner \cite{LOWNER} initiated the problem of estimating the coefficients of inverse function and  investigated the coefficient estimates for inverse function $f \in \mathcal{S}$.  Later on, this lead to the study of inverse coefficient problem for several subclasses of $\mathcal{S}$ by various authors \cite{ALIC,LIB1,LIB2,LIB3,DV}. In \cite{KAPOOR,KRZY}, authors obtained the initial inverse coefficients for the well known classes $C$ and $S^*(\a )$ $(0 \le \a \le 1)$. Recently, Ravichandran and Verma \cite{VS} determined the bounds on inverse coefficient for the Janowski starlike functions.

In this section, we shall investigate the bounds on inverse coefficient. The following lemma is needed to obtain the  coefficient bounds for the inverse function.


\begin{lemma}\cite [Lemma 3, p.254] {LIB1} \label{l4}
If $p(z) = 1+p_1z+p_2z^2+ \dots$ is a function in the class $\mathcal{P}$, then for any complex number $\nu$,
\[|p_2 - \nu p^2_1| \le 2 \max\{1,|2\nu-1|\}.\]
\end{lemma}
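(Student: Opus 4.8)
The plan is to reduce the estimate to the classical parametrization of the second Taylor coefficient of a function with positive real part, and then to maximize an affine function of a single real variable. First I would invoke the Herglotz representation: every $p\in\mathcal{P}$ can be written as $p(z)=\int_{|\zeta|=1}\frac{1+\zeta z}{1-\zeta z}\,d\mu(\zeta)$ for some probability measure $\mu$ on the unit circle, so that $p_1=2\int\zeta\,d\mu$ and $p_2=2\int\zeta^{2}\,d\mu$. Setting $m=\int\zeta\,d\mu$ and using $\int(\zeta-m)\,d\mu=0$, a one-line computation gives $2p_2-p_1^{2}=4\int(\zeta-m)^{2}\,d\mu$, hence, since $|\zeta|=1$,
\[
|2p_2-p_1^{2}|\le 4\int|\zeta-m|^{2}\,d\mu=4\bigl(1-|m|^{2}\bigr)=4-|p_1|^{2}.
\]
In particular $|p_1|\le2$, and there is a complex number $x$ with $|x|\le1$ such that $2p_2=p_1^{2}+(4-|p_1|^{2})x$. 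This is exactly the content of \cite[Lemma 3]{LIB1} (equivalently, the standard coefficient formula found in Pommerenke's monograph), so in the final write-up this step may simply be quoted rather than reproved.

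Next I would substitute this formula into the target quantity. Since
\[
p_2-\nu p_1^{2}=\Bigl(\tfrac12-\nu\Bigr)p_1^{2}+\tfrac12\bigl(4-|p_1|^{2}\bigr)x,
\]
the triangle inequality and $|x|\le1$ give $|p_2-\nu p_1^{2}|\le\bigl|\tfrac12-\nu\bigr|\,|p_1|^{2}+\tfrac12\bigl(4-|p_1|^{2}\bigr)$. I would then regard the right-hand side as a function of the single variable $t:=|p_1|^{2}\in[0,4]$; it equals $2+t\bigl(|\tfrac12-\nu|-\tfrac12\bigr)$, which is affine in $t$, so its maximum over $[0,4]$ is attained at an endpoint. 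When $|\tfrac12-\nu|\le\tfrac12$, i.e. $|2\nu-1|\le1$, the maximum is $2$, attained at $t=0$; when $|2\nu-1|>1$ the maximum is $4|\tfrac12-\nu|=2|2\nu-1|$, attained at $t=4$. In both cases the bound is $2\max\{1,|2\nu-1|\}$, which is the assertion; sharpness is witnessed by $p(z)=(1+z^{2})/(1-z^{2})$ in the first case ($p_1=0$, $p_2=2$) and $p(z)=(1+z)/(1-z)$ in the second ($p_1=p_2=2$).

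The only substantive ingredient is the Carath\'eodory formula $2p_2=p_1^{2}+(4-|p_1|^{2})x$; everything after it is the two-step reduction just described, which is entirely routine. Accordingly, the main point to be careful about is bookkeeping: one must state the parametrization with $|p_1|^{2}$ rather than $p_1^{2}$ so that the case analysis in the second paragraph remains valid when $p_1$ is genuinely complex, and one must track the equivalence $|\tfrac12-\nu|\le\tfrac12\Leftrightarrow|2\nu-1|\le1$ to recover the stated form of the bound. Since the lemma is cited verbatim from Libera and Z\l otkiewicz, in practice I would present only the Herglotz-representation derivation of the formula together with the short computation above, rather than a fully self-contained argument.
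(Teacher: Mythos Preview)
The paper does not prove this lemma at all; it is simply quoted from Libera and Z\l otkiewicz \cite{LIB1} and used as a black box, so there is no ``paper's own proof'' to compare against. Your argument is correct and self-contained: the Herglotz derivation of $|2p_2-p_1^{2}|\le 4-|p_1|^{2}$ (equivalently $2p_2=p_1^{2}+(4-|p_1|^{2})x$ with $|x|\le1$) followed by the affine maximization in $t=|p_1|^{2}$ gives exactly the stated bound, and your care in writing $|p_1|^{2}$ rather than $p_1^{2}$ is well placed since $\nu$ is allowed to be complex and no preliminary rotation is invoked.
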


\begin{theorem} \label{INV}
Let $\alpha \in \C$ such that $\RE \alpha \ge 0$ and the function $\phi$ defined by (\ref{phiz}). If function $f(z) = z+ \sum\limits_{n=2}^{\infty}a_n z^n \in \r(\a , \phi)$ and $f^{-1}(\omega) = \om + \sum\limits_{n=2}^{\infty}A_n \om^n $ for all $\om$ in some neighbourhood of the origin, then
\begin{itemize}
\item[(i)] $|A_2|  \le \dfrac{B_1}{2|1+\a |}$,
\item[(ii)] $|A_3|  \le \dfrac{B_1}{3|1+2 \a|}  \max\{1,|\mu|\}$,  where $\mu = \dfrac{3B_1(1+2 \a)}{2(1+\a)^2}-\dfrac{B_2}{B_1}$
\item[(iii)] If $B_1,B_2,B_3$ and $\a $ are real numbers, then
\[|A_4| \le \frac{B_1}{4(1+3 \a )} H(q_1; q_2),\]
where $H(q_1; q_2)$ is given in Lemma \ref{lemH} such that
\[q_1 = 2 \left(\frac{B_2}{B_1} - \frac{5B_1(1+3 \a )}{3(1+ \a )(1+2 \a )}\right)\] and
\[q_2 = \frac{B_3}{B_1} - \frac{10 B_2 (1+3 \a )}{3 (1+\a )(1+2 \a )} + \frac{5 B^2_1(1+3 \a )}{2(1+ \a )^3}.\]
\end{itemize}
\end{theorem}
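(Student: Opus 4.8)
The plan is to proceed exactly as in the proof of Theorem \ref{LOG}: start from the defining subordination $f'(z)+\a z f''(z)=\phi(w(z))$ with $w(z)=\sum_{n\ge1}c_nz^n$ a Schwarz function, expand both sides, and read off
\[
2(1+\a)a_2=B_1c_1,\quad 3(1+2\a)a_3=B_1c_2+B_2c_1^2,\quad 4(1+3\a)a_4=B_1c_3+2B_2c_1c_2+B_3c_1^3.
\]
Next I would substitute these into the standard relations between the coefficients of $f$ and those of $f^{-1}$, namely $A_2=-a_2$, $A_3=2a_2^2-a_3$, and $A_4=-5a_2^3+5a_2a_3-a_4$, obtained by formally inverting $f(z)=z+a_2z^2+a_3z^3+a_4z^4+\cdots$. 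This yields expressions for $A_2,A_3,A_4$ as linear combinations of $c_1$, $c_2$, $c_1^2$, $c_3$, $c_1c_2$, $c_1^3$ with coefficients that are rational functions of $\a$ and the $B_j$'s. In particular $A_2=-B_1c_1/(2(1+\a))$, giving (i) at once from $|c_1|\le1$.

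For (ii), the expression for $A_3$ will take the form $A_3=\dfrac{B_1}{3(1+2\a)}\bigl(c_2-\nu c_1^2\bigr)$ for an appropriate $\nu$ depending on $\a,B_1,B_2$; here the natural route is to \emph{not} use the Schwarz-function estimates of Lemma \ref{l1}, but instead to note that $w$ being a Schwarz function means $p(z)=(1+w(z))/(1-w(z))\in\mathcal P$, rewrite $c_1,c_2$ in terms of $p_1,p_2$ (so $c_1=p_1/2$, $c_2=p_2/2-p_1^2/4$), and apply Lemma \ref{l4} in the form $|p_2-\nu p_1^2|\le 2\max\{1,|2\nu-1|\}$. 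Matching this with the claimed $\mu=\dfrac{3B_1(1+2\a)}{2(1+\a)^2}-\dfrac{B_2}{B_1}$ is then a bookkeeping check: the factor $1/2$ from $c_2$ and the $p_1^2$ terms coming both from $c_2$ and from $c_1^2$ must be tracked carefully, and this is where I expect the only real risk of sign/constant errors. For (iii), the expression for $A_4$ should reduce to $A_4=\dfrac{B_1}{4(1+3\a)}\bigl(c_3+q_1c_1c_2+q_2c_1^3\bigr)$ with $q_1,q_2$ as stated; assuming $B_1,B_2,B_3,\a$ real so that $(q_1,q_2)$ is a genuine pair of real numbers, Lemma \ref{lemH} applies directly and gives $|A_4|\le \dfrac{B_1}{4(1+3\a)}H(q_1;q_2)$.

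The main obstacle is purely computational: correctly deriving the inverse-coefficient formulas $A_3=2a_2^2-a_3$ and especially $A_4=-5a_2^3+5a_2a_3-a_4$, then substituting the $a_i$ in terms of $c_i$ and collecting the $c_1^2$, $c_1c_2$, $c_1^3$ coefficients so that they exactly reproduce the stated $\mu$, $q_1$, $q_2$. In particular one should see the ``$5$'' from the inverse formula propagate into the coefficient $\dfrac{5B_1(1+3\a)}{3(1+\a)(1+2\a)}$ inside $q_1$ and the $\dfrac{5B_1^2(1+3\a)}{2(1+\a)^3}$ in $q_2$, which is a good consistency check against the analogous (but ``$1/2$,'' ``$1/3$'' weighted) logarithmic-coefficient computation in \eqref{ag}. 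Once the algebra is verified, parts (i)–(iii) follow immediately from $|c_1|\le1$, Lemma \ref{l4}, and Lemma \ref{lemH} respectively, with no further analytic input required.
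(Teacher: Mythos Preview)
Your proposal is correct and follows essentially the same route as the paper: the paper also derives the relations $A_2=-a_2$, $A_3=2a_2^2-a_3$, $A_4=-5a_2^3+5a_2a_3-a_4$, expresses the $a_i$ via the Schwarz coefficients $c_i$ (and, for $A_3$, via the Carath\'eodory coefficients $p_i$ through $p=(1+w)/(1-w)$), and then applies $|p_1|\le 2$, Lemma~\ref{l4}, and Lemma~\ref{lemH} for parts (i), (ii), (iii) respectively. The only cosmetic difference is that the paper first writes everything in the $p_i$ before passing back to the $c_i$ for $A_4$, whereas you start with the $c_i$ and pass to the $p_i$ only for $A_3$; the algebra and the lemmas invoked are identical.
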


\begin{proof}
Let $f \in \r(\a , \phi)$. Then
\begin{equation} \label{fphi}
f'(z) + \a z f''(z) = \phi(w(z)),
\end{equation}
where $w(z)$ is the analytic function $w$ with $w(0) = 0$ and $|w(z)| <1$.
Let \[p(z) = \frac{1+w(z)}{1-w(z)} = 1+p_1z +p_2 z^2 +p_3 z^3+ \cdots.\]
Since $w : \D \to \D$ is analytic thus $p$ is a function with positive real part and
\begin{equation} \label{eq w(z)}
w(z) = \frac{p(z)-1}{p(z) +1} = \frac{1}{2} p_1z +\frac{1}{2}(p_2 - \frac{p^2_1}{2})z^2 + \frac{1}{8}(p^3_1-4p_1p_2+4p_3)z^3+ \cdots.
\end{equation}
Then
\begin{align}
\phi(w(z)) &= 1+\frac{B_1p_1}{2}z+ \left(\frac{1}{4}B_2p^2_1 +\frac{1}{2}B_1(p_2-\frac{1}{2}p^2_1)\right)z^2 \notag\\
&+ \frac{1}{8}( (B_1-2B_2+B_3)p^3_1 +4(-B_1+B_2)p_1 p_2 +4B_1p_3) z^3 + \cdots.\label{phi}
\end{align}
Using expressions \eqref{phi} and \eqref{fphi}, we obtain
\begin{align}
2(1+ \a ) a_2&=\frac{ B_1p_1}{2},\notag \\
3(1+2 \a )a_3 &=\frac{1}{4}B_2 p^2_1 + \frac{1}{2}B_1(p_2-\frac{1}{2}p^2_1)
\quad \text{and} \notag \\
4(1+3 \a )a_4 &=  \frac{1}{8}( (B_1-2B_2+B_3)p^3_1 +4(-B_1+B_2)p_1 p_2 +4B_1p_3).\label{aB}
\end{align}
Since $f^{-1}(\omega) = \om + A_2 \om^2 +A_3 \om^3+A_4 \om^4 +\cdots$ in some neighbourhood of origin, so we have $f(f^{-1}(\om)) = \om$. That is
\begin{align*}
\om &= f^{-1}(\om)+a_2 (f^{-1}(\om))^2 +a_3 (f^{-1}(\om))^3+ \cdots\\
& = \om + A_2 \om^2 +A_3 \om^3+A_4 \om^4 +\cdots +a_2(\om + A_2 \om^2 +A_3 \om^3+A_4 \om^4 +\cdots)^2\\
&+a_3(\om + A_2 \om^2 +A_3 \om^3+A_4 \om^4 +\cdots)^3.
\end{align*}
A simple calculation gives the following realtions:
\begin{align}
A_2 &= -a_2 ,\notag\\
A_3 &= 2a^2_2 -a_3 \quad \text{and}\notag\\
A_4 &=-5a^3_2 +5a_2a_3 -a_4.\label{Aa}
\end{align}
On subsituting the values of $a_i$ from (\ref{aB}) into (\ref{Aa}) and a simple calculation yields
\begin{align*}
A_2 & = \dfrac{-B_1}{4(1+\a )} p_1,\\
A_3 & = \dfrac{-B_1}{6ˆ(1+2 \a )}p_2 + \left(\dfrac{B^2_1}{8(1+\a )^2} - \dfrac{B_2}{12(1+2 \a)} +\dfrac{B_1}{12(1+2 \a)}\right)p^2_1\\
\end{align*}
In (\ref{eq w(z)}), on taking $c_1 = \dfrac{p_1}{2}$, $c_2 = \dfrac{1}{2}(p_2-\dfrac{p^2_1}{2})$ , $c_3 = \dfrac{1}{8} (p^3_1-4p_1p_2+4p_3)$ and so on we get,
\begin{align}
2(1+\a )a_2 &= B_1c_1,\notag\\
3(1+2\a ) a_3 &= B_1c_2+B_2c^2_1  \quad\text{and} \notag\\
4(1+3 \a ) a_4 &= B_1c_3 +2 B_2 c_1 c_2  +B_3c^3_1. \label{eq ai}
\end{align}
On substituting the values of $a_i$ from (\ref{eq ai}) in (\ref{Aa}), we obtain
\begin{align*}
A_4 & = \frac{-B_1}{4(1+3 \a )} c_3 +  \left( \frac{5 B^2_1}{6(1+
\a )(1+2 \a )} -\frac{B_2}{4(1+3 \a )} \right)c_1 c_2 \\
 &+ \left(\frac{-B_3}{4(1+3 \a )} + \frac{5 B_1 B_2}{6(1+
\a )(1+2 \a )} + \frac{-5 B^3_1}{8(1+ \a )^3}\right)c^3_1.
\end{align*}

Since $|p_1| \le 2$,  we have
\[|A_2| \le \frac{B_1}{2|1+\a |}. \]
Consider
\[ |A_3| = \frac{B_1}{6|1+2 \a|}\left|p_2 - \left(\frac{3B_1(1+2 \a)}{4(1+\a)^2} -\frac{B_2}{2B_1}+\frac{1}{2}\right)p_1^2\right|.\]
Then by Lemma \ref{l4}, we get the desired estimate.
Using  Lemma \ref{lemH} for $A_4$, we obtain
\begin{align*}
|A_4| &= \left| \frac{-B_1}{4(1+3 \a )} c_3 +  \left(\frac{-B_2}{2(1+3 \a )} + \frac{5B^2_1}{6(1+
\a )(1+2 \a )}\right)c_1 c_2\right. \\
&\left.+ \left(\frac{-B_3}{4(1+3 \a )} + \frac{5 B_1 B_2}{6(1+
\a )(1+2 \a )} - \frac{5B^3_1}{8(1+ \a )^3}\right)c^3_1 \right|\\
&=\frac{B_1}{4(1+3 \alpha)} |c_3 +c_1c_2q_1 +c^3_1q_2|\\
&\le \frac{B_1}{4(1+3 \alpha)} H(q_1;q_2),
\end{align*}
where $q_1 = 2 \left( \dfrac{B_2}{B_1} - \dfrac{5B_1(1+3 \a )}{3(1+ \a )(1+2 \a )}\right)$ and $q_2 = \dfrac{B_3}{B_1} - \dfrac{10B_2(1+3\a )}{3(1+\a )(1+2 \a )} + \dfrac{5B^2_1(1+3 \a )}{2(1+\a )^3}$.
\end{proof}

The following corollaries are an immediate consequence of the Theorem \ref{INV} for $\phi(z) = e^z$, $\phi(z) = \sqrt{1+z}$ and $\phi(z) = (1+z)/(1-z)$, respectively.

\begin{corollary}
Let $\alpha \in \C$ such that $\RE \alpha \ge 0$ and  $\phi(z)=e^z$. If the function $f(z) = z+ \sum\limits_{n=2}^{\infty}a_n z^n \in \r(\a , \phi)$ and $f^{-1}(\omega) = \om + \sum\limits_{n=2}^{\infty}A_n \om^n $ for all $\om$ in some neighbourhood of the origin, then
\begin{itemize}
\item[(i)] $|A_2|  \le \dfrac{1}{2|1+\a |}$,
\item[(ii)] $|A_3|  \le \dfrac{1}{3|1+2 \a|}  \max\left\{1,\left|\dfrac{3(1+2 \a)}{2(1+\a)^2}-\dfrac{1}{2}\right|\right\}$

\item[(iii)]$|A_4| \le \dfrac{1}{4(1+3 \a )} H(q_1; q_2)$,

where $\a$ is real and $H(q_1; q_2)$ is given in Lemma \ref{lemH} such that
\[q_1 = \frac{-7+3 \a(-7+2 \a)}{3(1+ \a )(1+2 \a )}\] and
\[q_2 = \frac{1}{6} - \frac{5(1+3 \a )}{3 (1+\a )(1+2 \a )} + \frac{5(1+3 \a )}{2(1+ \a )^3}.\]

\end{itemize}
\end{corollary}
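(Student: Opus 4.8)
The plan is to specialize Theorem \ref{INV} to the choice $\phi(z)=e^z$ by reading off the Maclaurin coefficients $B_1,B_2,B_3$ and substituting them into the three displayed bounds. First I would record that $e^z = 1 + z + \tfrac12 z^2 + \tfrac16 z^3 + \cdots$, so that $B_1 = 1$, $B_2 = \tfrac12$, and $B_3 = \tfrac16$. These are all real, so the hypothesis ``$B_1,B_2,B_3$ and $\alpha$ are real numbers'' required for part (iii) of Theorem \ref{INV} is satisfied as soon as we additionally assume $\alpha$ real (which is why the corollary carries that extra restriction in part (iii)).

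Next I would substitute $B_1=1$ directly into parts (i) and (ii): part (i) gives $|A_2|\le \tfrac{1}{2|1+\alpha|}$ immediately, and part (ii) gives $|A_3|\le\tfrac{1}{3|1+2\alpha|}\max\{1,|\mu|\}$ with $\mu = \tfrac{3B_1(1+2\alpha)}{2(1+\alpha)^2} - \tfrac{B_2}{B_1} = \tfrac{3(1+2\alpha)}{2(1+\alpha)^2} - \tfrac12$, which is exactly the stated expression. For part (iii), I would substitute $B_1=1$, $B_2=\tfrac12$, $B_3=\tfrac16$ into the formulas for $q_1$ and $q_2$ from Theorem \ref{INV}:
\[
q_1 = 2\left(\frac{B_2}{B_1} - \frac{5B_1(1+3\alpha)}{3(1+\alpha)(1+2\alpha)}\right) = 1 - \frac{10(1+3\alpha)}{3(1+\alpha)(1+2\alpha)},
\]
and then simplify the right-hand side over the common denominator $3(1+\alpha)(1+2\alpha)$; since $3(1+\alpha)(1+2\alpha) = 3(1+3\alpha+2\alpha^2) = 3 + 9\alpha + 6\alpha^2$, the numerator becomes $(3+9\alpha+6\alpha^2) - 10(1+3\alpha) = -7 - 21\alpha + 6\alpha^2 = -7 + 3\alpha(-7+2\alpha)$, giving the claimed form. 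Similarly,
\[
q_2 = \frac{B_3}{B_1} - \frac{10 B_2(1+3\alpha)}{3(1+\alpha)(1+2\alpha)} + \frac{5B_1^2(1+3\alpha)}{2(1+\alpha)^3} = \frac16 - \frac{5(1+3\alpha)}{3(1+\alpha)(1+2\alpha)} + \frac{5(1+3\alpha)}{2(1+\alpha)^3},
\]
which is exactly the stated $q_2$. Then $|A_4|\le \tfrac{B_1}{4(1+3\alpha)}H(q_1;q_2) = \tfrac{1}{4(1+3\alpha)}H(q_1;q_2)$ follows from Theorem \ref{INV}(iii) and Lemma \ref{lemH}.

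The only mild subtlety — hardly an obstacle — is purely bookkeeping in the $q_1$ simplification: one must expand $(1+\alpha)(1+2\alpha)$ correctly and combine the constant $1$ (i.e.\ $2B_2/B_1$) with the fraction over the right common denominator; a sign slip there would propagate. Everything else is direct substitution, with no new analytic content beyond Theorem \ref{INV} itself. I would present the proof as essentially one line invoking Theorem \ref{INV} with the computed $B_i$, optionally displaying the $q_1,q_2$ simplifications so the reader can verify the stated coefficients.
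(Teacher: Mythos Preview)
Your proposal is correct and matches the paper's own approach exactly: the paper states this corollary as ``an immediate consequence of Theorem \ref{INV} for $\phi(z)=e^z$'' with no further argument, and your substitution of $B_1=1$, $B_2=\tfrac12$, $B_3=\tfrac16$ together with the algebraic simplification of $q_1$ and $q_2$ is precisely what that specialization amounts to.
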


\begin{corollary}
 Suppose that the function $f(z) = z+ \sum\limits_{n=2}^{\infty}a_n z^n \in \r(\a , \phi)$, where $\RE \alpha \ge 0$ and  $\phi(z)=\sqrt{1+z}$ and $f^{-1}(\omega) = \om + \sum\limits_{n=2}^{\infty}A_n \om^n $ for all $\om$ in some neighbourhood of the origin, then
\begin{itemize}
\item[(i)] $|A_2|  \le \dfrac{1}{4|1+\a |}$,
\item[(ii)] $|A_3|  \le \dfrac{1}{6|1+2 \a|}  \max\left\{1,\left|\dfrac{3(1+2 \a)}{4(1+\a)^2}+\dfrac{1}{4}\right|\right\}$,
\item[(iii)] $|A_4| \le \dfrac{1}{8(1+3 \a )} H(q_1; q_2)$,

where $\a$ is real and $H(q_1; q_2)$ is given in Lemma \ref{lemH} such that
\[q_1 = -\frac{1}{2}-\frac{5(1+3 \a)}{3(1+ \a )(1+2 \a )}\] and
\[q_2 = \frac{1}{8} + \frac{5(1+3 \a )}{12 (1+\a )(1+2 \a )} + \frac{5(1+3 \a) }{8(1+ \a )^3}.\]
\end{itemize}
\end{corollary}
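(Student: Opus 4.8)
The plan is to obtain this corollary as a direct specialization of Theorem~\ref{INV} to the Ma--Minda function $\phi(z)=\sqrt{1+z}$ that defines the lemniscate class $\mathcal{SL}$. There is no new idea needed here: once the Taylor coefficients $B_1,B_2,B_3$ of $\phi$ are identified, everything is a substitution into the three estimates of Theorem~\ref{INV}.

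First I would record the relevant coefficients of $\phi$. Expanding by the binomial series,
\[
\sqrt{1+z}=1+\tfrac12 z-\tfrac18 z^2+\tfrac1{16}z^3-\cdots,
\]
so in the notation of \eqref{phiz} we have $B_1=\tfrac12$, $B_2=-\tfrac18$, $B_3=\tfrac1{16}$. This $\phi$ is analytic on $\D$, has positive real part, and satisfies $\phi(0)=1$ and $\phi'(0)=\tfrac12>0$, so it meets the standing hypotheses imposed in the introduction; moreover $B_1,B_2,B_3$ are real, so part~(iii) of Theorem~\ref{INV} is applicable.

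Next I would substitute these values into Theorem~\ref{INV}. For $|A_2|$ this gives $B_1/(2|1+\a|)=1/(4|1+\a|)$. For $|A_3|$ the prefactor becomes $B_1/(3|1+2\a|)=1/(6|1+2\a|)$, and the constant inside the maximum is
\[
\mu=\frac{3B_1(1+2\a)}{2(1+\a)^2}-\frac{B_2}{B_1}=\frac{3(1+2\a)}{4(1+\a)^2}+\frac14,
\]
which is precisely the quantity in (ii). For $|A_4|$ the prefactor is $B_1/(4(1+3\a))=1/(8(1+3\a))$, and the parameters fed into $H$ in Lemma~\ref{lemH} become
\[
q_1=2\!\left(\frac{B_2}{B_1}-\frac{5B_1(1+3\a)}{3(1+\a)(1+2\a)}\right)=-\frac12-\frac{5(1+3\a)}{3(1+\a)(1+2\a)}
\]
and
\[
q_2=\frac{B_3}{B_1}-\frac{10B_2(1+3\a)}{3(1+\a)(1+2\a)}+\frac{5B_1^2(1+3\a)}{2(1+\a)^3}=\frac18+\frac{5(1+3\a)}{12(1+\a)(1+2\a)}+\frac{5(1+3\a)}{8(1+\a)^3}.
\]
Inserting these $q_1,q_2$ into $H(q_1;q_2)$ from Lemma~\ref{lemH} yields the asserted bound on $|A_4|$.

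There is no genuine obstacle: the entire content of the corollary is the arithmetic of plugging $B_1=\tfrac12$, $B_2=-\tfrac18$, $B_3=\tfrac1{16}$ into Theorem~\ref{INV}. The only point that requires care is the sign of $B_2$, which is negative; a slip there would flip a sign in $\mu$, $q_1$, or $q_2$ and propagate through the simplification, so I would check each resulting expression against the displayed formulas.
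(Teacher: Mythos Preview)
Your proposal is correct and is exactly the paper's approach: the corollary is stated as an immediate consequence of Theorem~\ref{INV} upon substituting $B_1=\tfrac12$, $B_2=-\tfrac18$, $B_3=\tfrac{1}{16}$ from the expansion of $\sqrt{1+z}$. Your arithmetic for $\mu$, $q_1$, and $q_2$ checks out, including the sign contributions from the negative $B_2$.
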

%
%
\section{Hankel Determinant}
The problem involving coefficient bounds have attracted the interest of many authors in particular to second Hankel determinants and Fekete Szeg\"{o} functional. The coefficient functional $|a_3 - \mu a^2_2|$ where $\mu$ is a complex number is called the Fekete Szeg\"{o} functional. The Fekete Szeg\"{o} problem involves maximizing  the functional $|a_3 - \mu a^2_2|$. Some authors have investigated the Fekete Szeg\"{o} problem for the coefficients of inverse function \cite{ALIC,Naz20,THOMAS}.
The Hankel determinant $|H_2(1)| = |a_3-a^2_2|$ is a particular case of the Fekete Szeg\"{o} functional and $H_2(2) = |a_2a_4-a^3_2|$ is called the second Hankel determinants. In 2013, Lee \emph{et. al.}  \cite{LEE}  obtained the bounds for the second Hankel determinant for the unified class of Ma-Minda starlike and convex functions. The authors \cite{THOMAS}  estimated the bounds on second Hankel determinant for the class of strongly convex functions of order $\a $ using the inverse coefficients. One may refer to \cite{Kumar17,HAY,Procd20,HAYMAN,NOONAN} for more details.
In the present section, we shall determine the Fekete Szeg\"{o} functional for the inverse coefficient.

\begin{theorem}
Suppose $\alpha \in \C$ such that $\RE \alpha \ge 0$ and the function $\phi$ defined by (\ref{phiz}). Let $f \in  \r(\a , \phi)$ and $f^{-1}(\omega) = \om + \sum\limits_{n=2}^{\infty}A_n \om^n $ for all $\om$ in some neighbourhood of the origin. Then for any complex number $\mu$, we have
\[|A_3 - \mu A^2_2| \le \frac{B_1}{3|1+2 \a |} \max\left\{1, \left|\frac{3B_1(1+2 \a)}{4(1+\a )^2}(\mu -2)+\frac{B_2}{B_1}\right|\right\}.\]
\end{theorem}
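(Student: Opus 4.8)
The plan is to mirror the computation in the proof of Theorem~\ref{INV}, but to track the combination $A_3-\mu A_2^2$ directly. Writing $f\in\r(\a,\phi)$ as $f'(z)+\a z f''(z)=\phi(w(z))$ and passing to the Carath\'eodory function $p(z)=(1+w(z))/(1-w(z))=1+p_1z+p_2z^2+\cdots\in\mathcal{P}$, the coefficient identities \eqref{aB} give $a_2=B_1p_1/(4(1+\a))$ and $3(1+2\a)a_3=\tfrac14 B_2p_1^2+\tfrac12 B_1\bigl(p_2-\tfrac12 p_1^2\bigr)$. The inversion relations \eqref{Aa}, namely $A_2=-a_2$ and $A_3=2a_2^2-a_3$, then yield $A_3-\mu A_2^2=(2-\mu)a_2^2-a_3$, which is an explicit polynomial of degree two in $p_1$ and degree one in $p_2$.

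Next I would group the $p_2$-term against the $p_1^2$-term. Substituting the above expressions for $a_2$ and $a_3$ and factoring out $-B_1/(6(1+2\a))$ gives
\[
A_3-\mu A_2^2=-\frac{B_1}{6(1+2\a)}\Bigl(p_2-\nu p_1^2\Bigr),\qquad
\nu=\frac{3B_1(1+2\a)}{8(1+\a)^2}(2-\mu)-\frac{B_2}{2B_1}+\frac12 .
\]
The decisive step is then to apply Lemma~\ref{l4} with this value of $\nu$, obtaining $|p_2-\nu p_1^2|\le 2\max\{1,|2\nu-1|\}$. A one-line simplification shows $2\nu-1=\dfrac{3B_1(1+2\a)}{4(1+\a)^2}(2-\mu)-\dfrac{B_2}{B_1}$, so that $|2\nu-1|=\left|\dfrac{3B_1(1+2\a)}{4(1+\a)^2}(\mu-2)+\dfrac{B_2}{B_1}\right|$; multiplying the bound for $|p_2-\nu p_1^2|$ by $\dfrac{B_1}{6|1+2\a|}$ produces exactly $\dfrac{B_1}{3|1+2\a|}\max\{1,|\cdots|\}$, which is the asserted inequality.

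Since all of the algebra is elementary and the only external input beyond the already-established coefficient relations is Lemma~\ref{l4}, I do not expect a genuine obstacle; the only point requiring care is the bookkeeping of constants when absorbing the factor $B_1/(1+2\a)$ and, in particular, checking the sign flip between $2\nu-1$ and the expression written inside the modulus in the statement. If one wished to append a sharpness claim (the theorem as stated does not), it would suffice to exhibit the Carath\'eodory function making Lemma~\ref{l4} an equality---for instance $p(z)=(1+z^2)/(1-z^2)$ when the maximum is attained at $1$, and $p(z)=(1+z)/(1-z)$ otherwise---and to read off the corresponding extremal $f$ and its inverse.
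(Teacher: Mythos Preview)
Your argument is correct and essentially identical to the paper's own proof: both substitute the coefficient relations \eqref{aB} and \eqref{Aa}, factor $A_3-\mu A_2^2$ into the form $-\dfrac{B_1}{6(1+2\a)}(p_2-\nu p_1^2)$ with the same $\nu$, and finish by invoking Lemma~\ref{l4}. Your explicit observation that $A_3-\mu A_2^2=(2-\mu)a_2^2-a_3$ and your check of the sign flip in $2\nu-1$ are minor expository additions, not a different method.
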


\begin{proof}
In view of euqations (\ref{aB}) and (\ref{Aa}), we get
\begin{align*}
|A_3- \mu A^2_2|
&= \left|\dfrac{-B_1}{6(1+2\a )}p_2+\left(\left(\dfrac{B^2_1}{8(1+\a )^2}+\dfrac{B_1-B_2}{12(1+2\a )}\right) - \mu \frac{B^2_1}{16(1+\a)^2}\right)p^2_1\right|\\
&=\left|\dfrac{B_1}{6(1+2\a )}\left(p_2+\left(\frac{-3B_1(1+2\a )}{4(1+\a)^2}+\frac{B_2}{2B_1}- \frac{1}{2}+\mu \frac{3B_1(1+2\a)}{8(1+\a)^2}\right)p^2_1\right) \right|\\
&= \left|\dfrac{B_1}{6(1+2\a )}(p_2-\nu p^2_1)\right|,
\end{align*}

where $\nu = \dfrac{3B_1(1+2\a )}{8(1+\a)^2}\left(2-\mu\right)-\dfrac{B_2}{2B_1}+ \dfrac{1}{2}$. By Lemma \ref{l4}, we get the required result.
\end{proof}

\begin{theorem} \label{HD}
Let $\alpha \in \C$ such that $\RE \alpha \ge 0$ and the function $\phi$ defined by (\ref{phiz}). Suppose function $f$ in  $\r(\a , \phi)$ and $f^{-1}(\omega) = \om + \sum\limits_{n=2}^{\infty}A_n \om^n $ for all $\om$ in some neighbourhood of the origin.
\begin{itemize}
\item[(i)] If $B_1,B_2$ and $B_3$ satisfy the conditions
\[4d_2 \le d_3,\quad d_1 \le \frac{B_1|1+\a |}{9|(1+2\a )^2|},\]
then
\[|A_2 A_4-A^2_3| \le\frac{B^2_1}{9|(1+2\a )^2|}. \]

\item[(ii)] If $B_1,B_2$ and $B_3$ satisfy the conditions
\[4d_2 \ge d_3,\quad d_1 - \frac{d_2}{2} - \frac{B_1}{16|1+3\a |} \ge 0\]
or
\[4d_2 \le d_3,\quad d_1 \ge \frac{B_1|1+\a |}{9|(1+2\a )^2|},\]

then
\[|A_2 A_4-A^2_3| \le\frac{B_1 d_1}{|1+\a |}. \]

\item[(iii)] If $B_1,B_2$ and $B_3$ satisfy the conditions
\[4d_1 > d_3, d_1 - \frac{d_2}{2} - \frac{B_1}{16|1+3\a |} \le 0,\]

then
\[|A_2 A_4-A^2_3| \le\dfrac{\dfrac{B_1}{16|1+\a |}\left(\dfrac{16B_1|1+\a |}{9|(1+2 \a )^2|}d_1 - \dfrac{B_1 }{|1+3 \a |}d_2 - 4 d^2_2 - \dfrac{B^2_1}{16|(1+3 \a)^2|}\right)}{d_1-d_2- \dfrac{B_1}{8|1+3\a |} + \dfrac{B_1|1+ \a |}{9|(1+2 \a)^2|}}, \]
\end{itemize}
where
\begin{align*}
d_1 &= \frac{B^3_1}{16|(1+\a )^3|} + \frac{B^2_2 |1+\a |}{9B_1|(1+2 \a )^2|}+\frac{B_1|B_2|}{12|1+\a ||1+2 \a |} +\frac{|B_3|}{8|1+3 \a|}\\
d_2 &= \frac{B^2_1}{12|1+\a ||1+2\a |} +\frac{|B_2|}{4 |1+3\a |} +\frac{2|B_2||1+\a |}{9|(1+2\a)^2 |}\\
\text{and}\\
d_3 &= \frac{8B_1|1+\a |}{9|(1+2\a)^2 |}-\frac{B_1}{2|1+3\a |}.
\end{align*}

\end{theorem}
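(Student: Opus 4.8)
The plan is to express $A_2A_4 - A_3^2$ in terms of the coefficients $p_1, p_2, p_3$ of the Carath\'eodory function $p$ introduced in the proof of Theorem \ref{INV}, and then to invoke the standard parametrization of these coefficients together with a maximization over the resulting region. First I would substitute the formulas for $a_2, a_3, a_4$ from \eqref{aB} into the relations \eqref{Aa}, exactly as already done for $A_2$, $A_3$, $A_4$ individually, so as to write $A_2A_4 - A_3^2$ as a polynomial expression in $p_1, p_2, p_3$ with coefficients depending on $B_1, B_2, B_3$ and $\alpha$. Collecting terms, this will have the shape $A_2A_4 - A_3^2 = \lambda_1 p_1^4 + \lambda_2 p_1^2 p_2 + \lambda_3 p_2^2 + \lambda_4 p_1 p_3$ for explicit constants $\lambda_i$; the quantities $d_1, d_2, d_3$ appearing in the statement are precisely built from $|\lambda_1|,\dots,|\lambda_4|$ after the triangle inequality is applied.

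Next I would apply the well-known representation formulas for coefficients of functions in $\mathcal{P}$: writing $2p_2 = p_1^2 + (4 - p_1^2)x$ and $4p_3 = p_1^3 + 2p_1(4-p_1^2)x - p_1(4-p_1^2)x^2 + 2(4-p_1^2)(1-|x|^2)z$ with $|x| \le 1$, $|z| \le 1$, and normalizing $p_1 = t \in [0,2]$ by rotation. Substituting these into the polynomial expression and using the triangle inequality bounds $|x| \le 1$, $|z| \le 1$, $|1 - |x|^2| \le 1$, I would obtain a bound of the form $|A_2A_4 - A_3^2| \le F(t, |x|)$ for some function $F$ that is (after the estimates) increasing in $|x|$, so that the maximum over $|x|$ is attained at $|x| = 1$. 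This reduces the problem to maximizing a one-variable function $G(t)$ on $[0,2]$; the constants $d_1, d_2, d_3$ and the three regimes (i), (ii), (iii) correspond exactly to the three cases in which the maximum of $G$ is attained at $t=0$, at $t=2$, or at an interior critical point, and the case conditions $4d_2 \lessgtr d_3$, $d_1 \lessgtr B_1|1+\alpha|/(9|(1+2\alpha)^2|)$, and $d_1 - d_2/2 - B_1/(16|1+3\alpha|) \lessgtr 0$ are precisely the sign conditions distinguishing these three situations. Evaluating $G(0)$ gives the bound in (i), $G(2)$ gives the bound in (ii), and the interior-maximum computation (solving $G'(t) = 0$ and substituting back) yields the quotient expression in (iii).

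The main obstacle I anticipate is bookkeeping: correctly assembling $A_2A_4 - A_3^2$ from \eqref{aB} and \eqref{Aa} without sign errors, since there is cancellation among the $p_1^4$ terms coming from $-5a_2^3$, $5a_2a_3$ and from $A_3^2 = (2a_2^2 - a_3)^2$, and then matching the resulting coefficients to the definitions of $d_1, d_2, d_3$. A secondary technical point is justifying that, after the triangle inequality, the bound is genuinely monotone in $|x|$ on the whole square $[0,2]\times[0,1]$ so that setting $|x|=1$ is legitimate in every regime; this monotonicity is what forces the splitting into exactly the three displayed cases, and it must be checked that the stated hypotheses on $B_1, B_2, B_3$ are exactly what is needed for the one-variable maximization to land in the claimed case. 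Once the expression is correctly reduced to $G(t)$ on $[0,2]$, the remaining work is the routine calculus of locating its maximum, which I would not carry out in detail here.
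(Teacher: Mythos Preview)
Your proposal is correct and follows essentially the same route as the paper: the paper also substitutes \eqref{aB} into \eqref{Aa} to write $A_2A_4-A_3^2$ as a polynomial in $p_1,p_2,p_3$, applies the Libera--Z\l otkiewicz parametrization (Lemma~\ref{l3}), uses the triangle inequality and monotonicity in $|x|$ to reduce to a one-variable function, and then performs the three-case maximization (via the substitution $t=p^2$ and the quadratic remark \eqref{QE}) that yields parts (i)--(iii). The only cosmetic difference is that the paper passes to $t=p^2\in[0,4]$ before maximizing, whereas you keep $t=p\in[0,2]$; the resulting case analysis is identical.
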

In the proof of this result, the following lemma is needed.
\begin{lemma} \cite [Lemma 2, p.254] {LIB1}\label{l3}
If $p(z) = 1+ \sum\limits_{n=1}^{\infty}p_n z^n \in \mathcal{P}$, then
\begin{align*}
2p_2 &= p^2_1+x(4-p^2_1)\\
4p_3 &=p^3_1+2p_1(4-p^2_1)x-p_1(4-p^2_1)x^2 +2(4-p^2_1)(1-|x|^2)z
\end{align*}
for some $x$ and $z$ such that $|x| \le 1$ and $|z|\le1$.
\end{lemma}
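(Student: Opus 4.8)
The plan is to reduce the statement to the well-understood coefficient structure of Schwarz functions via Schur's (continued-fraction) algorithm. Since $p\in\mathcal P$ with $p(0)=1$, the Cayley transform
\[
\omega(z)=\frac{p(z)-1}{p(z)+1}=c_1z+c_2z^2+c_3z^3+\cdots
\]
is a Schwarz function ($\omega(0)=0$, $|\omega|<1$ because $\RE p>0$). Inverting via $p=(1+\omega)/(1-\omega)$ and expanding the geometric series gives
\[
p_1=2c_1,\qquad p_2=2c_2+2c_1^2,\qquad p_3=2c_3+4c_1c_2+2c_1^3,
\]
so it suffices to parametrise $c_1,c_2,c_3$ and substitute back. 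As is standard, I would first reduce to the case $p_1\in[0,2]$ (equivalently $c_1\in[0,1]$) by the rotation $p(z)\mapsto p(e^{-i\theta}z)$; this is exactly what makes $4-p_1^2=4-|p_1|^2$ and allows the final formulas to be written with $p_1^2$ rather than $|p_1|^2$.

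For the Schwarz coefficients I would apply Schur's algorithm twice. By the Schwarz lemma, $\omega(z)=zg(z)$ with $g\colon\mathbb D\to\overline{\mathbb D}$ analytic and $g(0)=c_1$; a further Schur step writes
\[
g(z)=\frac{c_1+zG(z)}{1+\overline{c_1}\,zG(z)},\qquad G\colon\mathbb D\to\overline{\mathbb D}\ \text{analytic}.
\]
Expanding in powers of $z$ and matching with $g(z)=c_1+c_2z+c_3z^2+\cdots$ yields
\[
c_2=(1-|c_1|^2)\,g_0,\qquad c_3=(1-|c_1|^2)\bigl(g_1-\overline{c_1}\,g_0^2\bigr),
\]
where $g_0=G(0)$ and $g_1=G'(0)$. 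The only constraints on $G$ are $|g_0|\le1$ and, by the Schwarz--Pick inequality, $|g_1|\le 1-|g_0|^2$. Setting $x:=g_0$ and $g_1=(1-|x|^2)\zeta$ with $|\zeta|\le1$ then encodes exactly the admissible pairs $(c_2,c_3)$, and $\zeta$ will play the role of the parameter $z$ in the lemma.

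The conclusion is then pure substitution. Putting $c_1=p_1/2$ (real), $1-|c_1|^2=(4-p_1^2)/4$, and the above expressions for $c_2,c_3$ into $p_2=2c_2+2c_1^2$ produces $2p_2=p_1^2+x(4-p_1^2)$, while the same substitution into $p_3=2c_3+4c_1c_2+2c_1^3$, after multiplying through by $4$, collects precisely the four terms $p_1^3$, $2p_1(4-p_1^2)x$, $-p_1(4-p_1^2)x^2$ and $2(4-p_1^2)(1-|x|^2)\zeta$. Identifying $\zeta$ with $z$ finishes the argument, and the bounds $|x|\le1$, $|z|\le1$ are exactly the Schur constraints recorded above. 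The degenerate case $|c_1|=1$ forces $g$ constant and $c_2=c_3=0$, which is consistent with $1-|c_1|^2=0$.

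The genuinely delicate point---and the step I would write out most carefully---is the determination of $c_3$. The crude Schwarz bound of Lemma \ref{l1} only gives $|c_3|\le 1-|c_1|^2$, which describes the \emph{wrong} region: once $c_1$ and $c_2$ (hence $x$) are fixed, $c_3$ is confined to a disk whose centre $-(1-|c_1|^2)\overline{c_1}\,x^2$ is in general nonzero and whose radius is $(1-|c_1|^2)(1-|x|^2)$. Capturing this exact disk is what compels the full Schur reduction (equivalently, the nonnegativity of the third Carath\'eodory--Toeplitz determinant) in place of the coarser coefficient inequalities, and it is the origin of the characteristic factor $(1-|x|^2)$ multiplying $z$ in the expression for $4p_3$.
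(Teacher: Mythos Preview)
Your derivation is correct. The Cayley transform to a Schwarz function, followed by two Schur steps and the Schwarz--Pick bound $|g_1|\le 1-|g_0|^2$, is the standard route to these parametrisations, and the algebra you indicate checks out: with $c_1=p_1/2$ real and $x=g_0$, $\zeta=z$, the substitutions into $p_2=2c_2+2c_1^2$ and $p_3=2c_3+4c_1c_2+2c_1^3$ collapse exactly to the stated formulas.

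As for comparison with the paper: there is nothing to compare. The paper does not prove Lemma~\ref{l3}; it is quoted verbatim from Libera and Z\l otkiewicz \cite[Lemma~2, p.~254]{LIB1} and used as a black box in the proof of Theorem~\ref{HD}. Your Schur-reduction argument is essentially the same mechanism underlying the original proof in \cite{LIB1} (which in turn rests on the Carath\'eodory--Toeplitz characterisation of $\mathcal P$), so you are reproducing the classical proof rather than offering an alternative. One small remark: the normalisation $p_1\in[0,2]$ via rotation is indeed how the lemma is meant to be read, and this is used without comment later in the paper when $p_1=p$ is taken in $[0,2]$; you are right to flag it, since without it the terms $4-p_1^2$ should be $4-|p_1|^2$.
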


\begin{remark}
For real numbers $P$, $Q$ and $R$,  a standard computation gives
\begin{equation}
\label{QE} \max_{0 \le t \le4}(P t^2+Qt+R) =\left\{ \begin{array}{lll}
                R, & Q \le 0, P \le-Q/4 \\
                  16P+4Q+R , & Q\ge 0 ,P\ge -Q/8 \,\mbox{or}\,Q\le0,P\ge-Q/4\\
                  \dfrac{4PR-Q^2}{4P},& Q>0, P\le -Q/8
                             \end{array}
                       \right.
                       \end{equation}
\end{remark}
\begin{proof} [Proof of Theorem \ref{HD} ]
It follows from equations (\ref{aB}) and (\ref{Aa}) that
\begin{align*}
A_2A_4-A^2_3 &= \dfrac{-B_1p_1}{4(1+\a )}\left(\dfrac{-B_1}{8(1+3 \a )}p_3+\left(\dfrac{5B^2_1}{24(1+\a )(1+2\a )}+\dfrac{B_1-B_2}{8(1+3\a )}\right)p_1p_2 \right.\\
&\left. +\left(\dfrac{-5B^3_1}{6(1+\a )^3}-\dfrac{5B_1 (B_2-B_1)}{48(1+\a )(1+2\a )}-\dfrac{B_1-2B_2+B_3}{32(1+3\a )}\right)p^3_1\right)\\
&-\left(\left(\dfrac{B^2_1}{8(1+\a )^2}+\dfrac{(B_1-B_2)}{12(1+2\a )}\right)p^2_1-\dfrac{B_1 p_2}{6(1+2\a )}+\right)^2\\
&=p^4_1\left(\frac{B^4_1}{256(1+\a )^4} + \frac{B^2_1(B_1-B_2)}{192(1+\a )^2(1+2 \a)}+\frac{B_1(B_1-2B_2+B_3)}{128(1+\a) (1+3 \a )}\right.\\
 & \left.- \frac{(B_1-B_2)^2}{144(1+2 \a )^2} \right)+\frac{B^2_1 p_1p_3}{32(1+\a )(1+3\a )} - \frac{B^2_1 p^2_2}{36(1+2 \a )^2}\\
&+p^2_1p_2\left(\frac{-B^3_1}{96(1+\a )^2(1+2 \a )}-\frac{B_1(B_2-B_1)}{32(1+\a)(1+3\a )}+\frac{B_1(B_1-B_2)}{36(1+2\a )^2}\right).
\end{align*}
In view of Lemma \ref{l3}, we obtain
\begin{align*}
A_2A_4-A^2_3 &= \frac{B_1}{16(1+\a )} \left[p^4_1\left(\frac{B^3_1}{16(1+\a )^3} - \frac{B^2_2(1+\a )}{9B_1(1+2 \a)^2} - \frac{B_1B_2}{12(1+\a ) (1+2 \a)}+ \frac{B_3}{8(1+3 \a)}\right)\right.\\
&+2p^2_1(4-p^2_1)x \left(-\frac{B^2_1}{24(1+\a )(1+2 \a)}+\frac{B_2}{8(1+3\a)}-\frac{B_2(1+\a )}{9(1+2 \a)^2}\right)\\
&\left.+(4-p^2_1)x^2p^2_1\left(\frac{-B_1}{8(1+3\a)} \right) -(4-p^2_1)^2x^2\left(\frac{B_1(1+\a )}{9(1+2\a)^2} \right)\right.\\
&\left.+2p_1(4-p^2_1)(1-|x|^2)z\frac{B_1}{8(1+3\a)}\right].
\end{align*}
Since $|p_1| \le 2$ and it can be assumed that $p_1 >0$ and thus we get that $p_1 \in[0,2]$. Letting $p_1 = p$ and $|x| = \gamma$ in the above expression, we get
\begin{align*}
|A_2A_4-A^2_3| & \le \frac{B_1}{16|1+\a |} \left[p^4\left(\frac{B^3_1}{16|(1+\a )^3|} + \frac{B^2_2|1+\a |}{9B_1|(1+2 \a)^2|} + \frac{B_1|B_2|}{12|1+\a | |1+2 \a|}+ \frac{|B_3|}{8|1+3 \a|}\right)\right.\\
&+2p^2(4-p^2) \gamma\left(\frac{B^2_1}{24|1+\a ||1+2 \a|}+\frac{|B_2|}{8|1+3\a|}+\frac{|B_2||1+\a |}{9|(1+2 \a)^2|}\right)\\
&\left.+(4-p^2)\gamma^2p^2\left(\frac{B_1}{8|1+3\a|} \right) +(4-p^2)^2 \gamma^2\left(\frac{B_1|1+\a| }{9|(1+2\a)^2|} \right)\right.\\
&\left.+2p(4-p^2)(1-\gamma^2)|z|\frac{B_1}{8|1+3\a|}\right].\\
&=: G(p,\gamma).
\end{align*}
Let $p$ be fixed. Using the first derivative test in the region $\Omega = \{(p,\gamma): 0 \le p \le 2, 0 \le \gamma \le 1\}$ we get that $G(p,\gamma)$ is an increasing function of $\gamma$ where $\gamma \in [0,1]$. Thus for fixed $p \in [0,2]$, we obtain

$\max_{0\le \gamma \le 1}G(p,\gamma) = G(p,1) =:F(p)$,
where
\begin{align*}
F(p) &=\frac{B_1}{16|1+\a |}\left[ p^4\left(\frac{B^3_1}{16|(1+\a )^3|}+\frac{B^2_2|1+\a |}{9B_1|(1+2 \a)^2|}+\frac{B_1|B_2|}{12|1+\a | |1+2 \a|}+\frac{|B_3|}{8|1+3 \a|}\right.\right.\\
&\left.-\frac{B^2_1}{12|1+\a ||1+2 \a|}-\frac{|B_2|}{4|1+3\a|}-\frac{2|B_2||1+\a |}{9|(1+2 \a)^2|}-\frac{B_1}{8|1+3\a|}+\frac{B_1|1+\a |}{9|(1+2 \a)^2|}\right)\\
&+p^2\left(\frac{B^2_1}{3|1+\a ||1+2 \a|}+\frac{|B_2|}{|1+3\a|}+\frac{8|B_2||1+\a |}{9|(1+2 \a)^2|}+\frac{B_1}{2|1+3\a|}-\frac{8B_1|1+\a |}{9|(1+2 \a)^2|}\right)\\
&\left.+\frac{16B_1|1+\a |}{9|(1+2 \a)^2|}\right].
\end{align*}
Let
\begin{align*}
P&=\frac{B_1}{16|1+\a |}\left[\left(\frac{B^3_1}{16|(1+\a )^3|}+\frac{B^2_2|1+\a |}{9B_1|(1+2 \a)^2|}+\frac{B_1|B_2|}{12|1+\a | |1+2 \a|}+\frac{|B_3|}{8|1+3 \a|}\right.\right)\\
&-\left.\left(\frac{B^2_1}{12|1+\a ||1+2 \a|}+\frac{|B_2|}{4|1+3\a|}+\frac{2|B_2||1+\a |}{9|(1+2 \a)^2|}\right)+\left(-\frac{B_1}{8|1+3\a|}+\frac{B_1|1+\a |}{9|(1+2 \a)^2|}\right)\right],\\
Q&=\frac{B_1}{16|1+\a |}\left[ 4\left(\frac{B^2_1}{12|1+\a ||1+2 \a|}+\frac{B_2}{4|1+3\a|}+\frac{2|B_2||1+\a |}{9|(1+2 \a)^2|}\right)\right.\\
&\left.-\left(\frac{8B_1|1+\a |}{9|(1+2 \a)^2|}-\frac{B_1}{2|1+3\a|}\right)\right],\\
R&=\frac{B^2_1}{9|(1+2\a )^2|}\quad \mbox{and} \quad t = p^2.
\end{align*}
Then $F(t) = Pt^2+Qt+R$. Using the inequality (\ref{QE}), we get the required result.
\end{proof}

\end{document}